\documentclass[12pt,fleqn,leqno]{amsart}
\usepackage{amsfonts,amssymb}
\usepackage{enumitem}

\usepackage[svgnames]{xcolor}
\usepackage[colorlinks,linkcolor=blue,citecolor=Green]{hyperref}
\usepackage{titlesec}
\usepackage{fourier}
\usepackage[nameinlink]{cleveref}

\usepackage[a4paper,centering]{geometry}

\titleformat{\section}[block]
 {\bfseries}
 {\thesection.}
 {\fontdimen2\font}
 {}

\setlist{noitemsep}
\setenumerate{labelindent=\parindent,label=\upshape{(\alph*)}}

\newtheorem{theorem}{Theorem}[section]
\newtheorem{corollary}[theorem]{Corollary}
\newtheorem{proposition}[theorem]{Proposition}

\theoremstyle{definition}
\newtheorem{remark}[theorem]{Remark}
\newtheorem{example}[theorem]{Example}
\newtheorem{definition}[theorem]{Definition}
\newtheorem{question}{Question}

\DeclareMathOperator{\N}{\mathbb{N}}
\DeclareMathOperator{\R}{\mathbb{R}}

\DeclareMathOperator{\cut}{ct}
\DeclareMathOperator{\noncut}{nct}
\DeclareMathOperator{\ord}{ord}
\DeclareMathOperator{\pend}{end}

\DeclareMathAlphabet{\mathpzc}{OT1}{pzc}{m}{it}
\DeclareMathOperator{\sel}{\mathpzc{V\mkern-5mu_{cs}}}
\DeclareMathOperator{\ocs}{\mathcal{O}\mathpzc{\mkern-3mu_{cs}}}

\numberwithin{equation}{section}
\begin{document}

\author{Valentin Gutev}
  \address{Institute of Mathematics and Informatics, Bulgarian Academy of Sciences,
Acad. G. Bonchev Street, Block 8, 1113 Sofia, Bulgaria}

\email{\href{mailto:gutev@math.bas.bg}{gutev@math.bas.bg}}

\subjclass[2010]{54A20, 54B20, 54C65, 54F05}

 \keywords{Vietoris topology, continuous selection, monotone
   selection, endpoint, totally disconnected space}

\title{On the Variety of Hyperspace Selections}

\begin{abstract}
  If $f$ is a continuous selection for the Vietoris hyperspace
  $\mathcal{F}(X)$ of the nonempty closed subsets of a space $X$, then
  the point $p=f(X)\in X$ is not as arbitrary as it might seem at
  first glance. In fact, the set $\ocs(X)$ of all these points reveals
  certain information about the variety of Vietoris continuous
  selections for $\mathcal{F}(X)$. Thus, for a connected space $X$, we
  will show that every point $p\in \ocs(X)$ is not only noncut, but
  also an endpoint of $X$. Another result of this paper is
  that in an arbitrary topological space $X$, the closure of the set
  $\ocs(X)$ is always a totally disconnected subset. Furthermore, we
  will also show that $\ocs(X)$ is a closed subset of every first
  countable totally disconnected space $X$. 
\end{abstract}

\date{\today}
\maketitle

\section{Introduction}

All spaces in this paper are infinite Hausdorff topological
spaces. Let $\mathcal{F}(X)$ be the set of all nonempty closed subsets
of a space $X$. We endow $\mathcal{F}(X)$ with the \emph{Vietoris
  topology} $\tau_V$, and call it the \emph{Vietoris hyperspace} of
$X$. Let us recall that $\tau_V$ is generated by all collections of
the form
\[
  \langle\mathcal{V}\rangle = \left\{S\in \mathcal{F}(X) : S\subseteq
    \bigcup \mathcal{V}\ \ \text{and}\ \ S\cap V\neq \emptyset\
    \text{for all}\ V\in \mathcal{V}\right\},
\]
where $\mathcal{V}$ runs over the finite families of open subsets of
$X$. A map $f:\mathcal{D}\to X$ is a \emph{selection} for a subset
$\mathcal{D}\subseteq \mathcal{F}(X)$ if $f(S)\in S$ for every
$S\in\mathcal{D}$. A selection $f:\mathcal{D}\to X$ is
\emph{continuous} if it is continuous with respect to the relative
Vietoris topology on $\mathcal{D}$, and we will use
$\sel[\mathcal{D}]$ to denote the set of all \emph{Vietoris continuous
  selections} for $\mathcal{D}$.\medskip

Most of the considerations in this paper will be related to the subset
$\ocs(X)\subseteq X$ of a space $X$ defined by
\begin{equation}
  \label{eq:Sequences-v2:1}
  \ocs(X)=
  \left\{f(X): f\in \sel[\mathcal{F}(X)]\right\}. 
\end{equation}
Of course, such considerations make sense when this set is nonempty,
i.e.\ when $\sel[\mathcal{F}(X)]\neq\emptyset$, in which case
$\ocs(X)$ can be regarded as the $X$-`Orbit' with respect to the
`action' of $\sel[\mathcal{F}(X)]$ on the hyperspace
$\mathcal{F}(X)$. Local properties of the elements of $\ocs(X)$ were
studied in \cite{Gutev2024a}, where this set was simply denoted by
$X_\Theta$. In this paper, we will study properties of $\ocs(X)$ as a
subspace of the space $X$.\medskip

The set $\ocs(X)$ can reveal certain information about the variety of
Vietoris continuous selections for $\mathcal{F}(X)$, and is not as
arbitrary as it might seem at first glance. In fact, $\ocs(X)$ was
implicitly present in several known results. To this end, let us
recall that a space $X$ is \emph{orderable} (or \emph{linearly
  ordered}\,) if it is endowed with the open interval topology
generated by some linear order on $X$, called \emph{compatible} for
$X$. Subspaces of orderable spaces are not necessarily orderable, they
are termed \emph{suborderable}. A space $X$ is \emph{weakly orderable}
if there exists a coarser orderable topology on $X$ with respect to
some linear order on it (called \emph{compatible} for $X$). The weakly
orderable spaces were introduced by Eilenberg \cite{eilenberg:41}
under the name of ``ordered'' topological spaces, and are often called
``Eilenberg orderable''. In the same paper, Eilenberg showed in
\cite[Theorem II]{eilenberg:41} that each connected weakly orderable
space has precisely two compatible linear orders which are inverse of
each other.\medskip

For a space $X$, let
$\mathcal{F}_2(X)=\left\{S\in \mathcal{F}(X): |S|\leq 2\right\}$. A
selection $\sigma:\mathcal{F}_2(X)\to X$ is commonly called a
\emph{weak selection} for $X$. In \cite[Definition 7.1]{michael:51},
Michael identified each weak selection $\sigma$ for $X$ with an
order-like relation $\leq_\sigma$ on $X$ defined by $x\leq_\sigma y$
whenever $\sigma(\{x,y\})=x$. The binary relation $\leq_\sigma$ on $X$
thus defined is both \emph{total} and \emph{antisymmetric}, but may
not be \emph{transitive} even when $\sigma$ is continuous. However,
when $X$ is connected and $\sigma$ is continuous, Michael showed in
\cite[Lemma 7.2]{michael:51} that not only is the relation
$\leq_\sigma$ transitive, but also that $X$ is weakly orderable with
respect to this linear order. Furthermore, for a connected space $X$
and $f\in\sel[\mathcal{F}(X)]$, he improved this result by showing in
\cite[Lemma 7.3]{michael:51} that $f(S)$ is the first $\leq_f$-element
of $S$ for every ${S\in \mathcal{F}(X)}$. In \cite{gutev-nogura:01a},
such a selection $f\in \sel[\mathcal{F}(X)]$ is called
\emph{monotone}. There is nothing to prevent us from considering
monotone selections for general topological spaces. In fact, such
selections are naturally related to the so-called topologically
``well-orderable'' spaces, which were introduced in
\cite{engelking-heath-michael:68}. This relationship is discussed in
detail in the next section.\medskip

For a space $X$ and a monotone selection $f\in \sel[\mathcal{F}(X)]$,
the relation $\leq_f$ is a linear order on $X$ such that $p=f(X)$ is
the first $\leq_f$-element of $X$.  Hence, for a connected space $X$,
the set $X\setminus \{p\}=\left\{q\in X: p<_f q\right\}$ is an
interval in $X$ and is therefore also connected because $X$ is weakly
orderable with respect to $\leq_f$, see e.g.\ \cite[Theorem
1.3]{MR0339099}. Accordingly, in the realm of connected spaces, each
$p\in \ocs(X)$ is a \emph{noncut} point of $X$ and the cardinality of
$\ocs(X)$ doesn't exceed that of the noncut points of $X$. Thus, using
Eilenberg's result in \cite{eilenberg:41}, Michael obtained in
\cite[Proposition 7.8]{michael:51} that $\left|\ocs(X)\right|\leq 2$
for every connected spaces $X$. In the next section, we will refine
this result by showing that in this case, each point $p\in \ocs(X)$ is
in fact an \emph{endpoint} of the connected space $X$. Some supporting
examples will be also given in the same section. For instance, we will
show that there exists a connected space $X$ such that
$\ocs(X)\neq \emptyset$ and $p\notin \ocs(X)$ for some endpoint
$p\in X$. \medskip

The above property of $p=f(X)\in \ocs(X)$ for a monotone selection
${f\in\sel[\mathcal{F}(X)]}$ has another interesting
interpretation. Namely, a point $p\in X$ is called \emph{selection
  maximal} \cite{gutev-nogura:03a}, see also
\cite{garcia-ferreira-gutev-nogura-sanchis-tomita:99,gutev-nogura:00b},
if there exists a continuous selection $f$ for $\mathcal{F}(X)$ such
that ${f (S) = p}$ for every $S\in \mathcal{F}(X)$ with $p\in S$. In
this case, the selection $f$ is called \emph{$p$-maximal}.  Each
monotone selection $f\in \sel[\mathcal{F}(X)]$ is $f(X)$-maximal, but
there are simple examples of ``point''-maximal selections which are
not monotone. For instance, consider a space $Y$ such that
$\mathcal{F}(Y)$ has a continuous selection that is not monotone, and
add an isolated point to this space. The selection maximal elements of
the set $\ocs(X)$ were described in detail in \cite[Theorem
1.5]{Gutev2024a}. This result will play an interesting role in this
paper as well, see the proof of
\Cref{theorem-PM-Points-v21:1}.\medskip

For a general space $X$ with $\sel[\mathcal{F}(X)]\neq \emptyset$, the
set $\ocs(X)$ is finite precisely when $X$ has finitely many connected
components \cite[Theorem 1]{nogura-shakhmatov:97b}, see also
\cite[Corollary
2.7]{garcia-ferreira-gutev-nogura-sanchis-tomita:99}. The case when
$\ocs(X)$ is infinite doesn't suggest any definite result. This is
briefly discussed in \Cref{sec:select-totally-disc}, where we will
show that closure of the set $\ocs(X)$ is always a totally
disconnected subset of a space $X$ (\Cref{theorem-Sequences-v3:1}). In
the same section, we will also show that $\ocs(X)\subseteq \ocs(Y)$
for every closed subset $Y\subseteq X$ with $\ocs(X)\subseteq Y$
(\Cref{theorem-PM-Points-v21:1}). In \Cref{theorem-Sequences-v4:2} of
the last \Cref{sec:select-first-count} of this paper, we will show
that $\ocs(X)$ is a closed subset of every totally disconnected first
countable space $X$. Several supporting examples will be provided in
the same section (see
\Cref{example-PM-Points-v21:1,example-PM-Points-v21:2}). Finally, let
us also remark that several open questions are listed in
\Cref{sec:monotone-selections,sec:select-first-count}.

\section{Monotone Selections}
\label{sec:monotone-selections}

The starting point for the considerations in this section is the
following result obtained by Michael in \cite[Lemmas 7.2, 7.3 and
7.5.1]{michael:51}.

\begin{theorem}[\cite{michael:51}]
  \label{theorem-PM-Points-v17:1}
  For a space $X$ and a selection $f:\mathcal{F}(X)\to X$, the
  following properties hold.
  \begin{enumerate}[label=\upshape{(\roman*)}]
  \item\label{item:PM-Points-v17:1} If $X$ is weakly orderable and\/
    $\leq$ is a compatible linear order on $X$ such that $f(S)$ is the
    first\/ $\leq$-element of\/ $S$ for every $S\in \mathcal{F}(X)$, then
    $f\in \sel[\mathcal{F}(X)]$.
  \item\label{item:PM-Points-v17:2} If $X$ is connected and
    $f\in \sel[\mathcal{F}(X)]$, then\/ $\leq_f$ is a linear order on
    $X$ such that $X$ is weakly orderable with respect to this order
    and $f(S)$ is the first\/ $\leq_f$-element of $S$ for every
    $S\in \mathcal{F}(X)$.
  \end{enumerate}
\end{theorem}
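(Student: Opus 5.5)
For \ref{item:PM-Points-v17:1}, I will check continuity of $f$ at a fixed $S\in\mathcal{F}(X)$ with $p=f(S)$ directly. Let $U$ be an open neighbourhood of $p$. Since the topology of $X$ is finer than the open interval topology of $\leq$, I can shrink $U$ to an open interval of the form $(a,b)$, $[p,b)$, $(a,p]$ or $\{p\}$ that lies inside $U$, according to whether $p$ has an immediate predecessor or successor or is an endpoint. Every point of $S$ is $\geq p$, because $p$ is the first element of $S$.

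I will then build a Vietoris neighbourhood $\langle\mathcal{V}\rangle$ of $S$ from two sets: the interval $W$ chosen above, and the open ray $R=\{x: x>a'\}$, where $a'$ is the left endpoint of $W$ if there is one, and otherwise $R=X$. The intervals and rays are open in $X$, and $S\subseteq W\cup R$ since every point of $S$ lies above $a'$, so $\mathcal{V}=\{W,R\}$ gives a basic neighbourhood of $S$. If $T\in\langle\mathcal{V}\rangle$, then $T$ meets $W$ and $T$ misses $(\leftarrow,a']$. Hence the first element of $T$ is $>a'$ and at most some point of $T\cap W\subseteq(\leftarrow,b)$, so $f(T)\in W\subseteq U$.

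The degenerate cases are routine bookkeeping. When $p$ is the minimum of $X$, I take $R=X$. When $W=[p,b)$ is not open in the order topology, I use instead the open set $(c,b)$, where $c$ is the immediate predecessor of $p$; it still contains no point below $p$ other than gaps. I will not grind through these.

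For \ref{item:PM-Points-v17:2} I follow Michael, in four steps.

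\emph{Step 1.} For distinct $x,y$, the set $\{(x,y): x<_f y\}=\{(x,y): f(\{x,y\})=x,\ x\neq y\}$ is open in $X^2\setminus\Delta$. This follows because $(x,y)\mapsto\{x,y\}$ is continuous into $\mathcal{F}_2(X)$ and $f$ is continuous.

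\emph{Step 2.} Fix $x$ and put $L_x=\{y: y<_f x\}$ and $R_x=\{y: x<_f y\}$. By Step 1 both are open, they are disjoint, and their union is $X\setminus\{x\}$.

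\emph{Step 3.} I prove transitivity, which is the main obstacle. Suppose $x<_f y<_f z$ but $z<_f x$, a $3$-cycle. Then $f(\{x,y,z\})$ is one of the three points, say it equals $x$ by symmetry of the cycle. I will try to move continuously from $\{x,y,z\}$ to $\{x,z\}$, whose $f$-value is $z$, using the connectedness of $X$. Concretely, $\{x,z\}\cup\{t\}$ for $t\in X$ defines a continuous map $t\mapsto f(\{x,z,t\})\in\{x,z,t\}$. The sets $\{t:f=x\}$ and $\{t:f=z\}$, intersected with $X\setminus\{x,z\}$, together with $\{t: f=t\}$, need to be shown to be relatively clopen. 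Their boundaries are controlled by $t\to x$ or $t\to z$, and at $t=x$ the value $f(\{x,z\})=z$. I expect this to yield a contradiction, with Michael's argument using $L_x,R_x$ as the disconnection. Once $\leq_f$ is a linear order, Step 2 shows that the open rays are open in $X$, so $X$ is weakly orderable.

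\emph{Step 4.} To show that $f(S)$ is $\leq_f$-first in $S$, suppose $q=f(S)$ and some $s\in S$ has $s<_f q$. Connectedness makes the order dense and without jumps. I then compare $S$ with $S\cup\{t\}$ for $t\in L_q$ and approximate $S$ by finite sets. For finite sets I use induction via Step 3, and then continuity extends the conclusion to all closed $S$.
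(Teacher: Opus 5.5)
\textbf{Part (i).} Your proof breaks at the shrinking step. The topology of $X$ is \emph{finer} than $\mathcal{T}_\leq$, so an open $U\ni p$ need not contain any interval around $p$ that is open in $X$. This is not a matter of bookkeeping. Let $X=[0,1]$ carry the topology generated by the Euclidean open sets together with $X\setminus K$, where $K=\{\frac12-\frac1n:n\geq 3\}$. This space is weakly orderable for the usual order, and every nonempty closed set has a first element (every neighbourhood of a point $x<1$ contains some $[x,x+\varepsilon)$), so (i) applies. Yet for $S=\{\frac12\}$ and $U=X\setminus K$, every $X$-open convex set containing $\frac12$ meets $K$, no interval $[\frac12,b)$ is open, and $\frac12$ has no immediate predecessor. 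So neither your $W$ nor your fallback $(c,b)$ exists.

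The missing idea is that the first-element hypothesis must be used to control $U$ to the right of $p$, which your argument never does. If $[p,\to)_\leq\not\subseteq U$, the closed set $[p,\to)_\leq\setminus U$ has a first element $b>p$, whence $[p,b)_\leq\subseteq U$. Now use the non-convex open sets $W=U\cap(\leftarrow,b)_\leq$ (or $W=U$ if $[p,\to)_\leq\subseteq U$) and $R=U\cup(p,\to)_\leq$. Clearly $S\in\langle\{W,R\}\rangle$. For $T\in\langle\{W,R\}\rangle$ with first element $m$, there are two cases. If $m\leq p$, then $m\in U$ because $T\subseteq R$. If $m>p$, then $m\in(p,b)_\leq\subseteq U$ because $T$ meets $W$.

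\textbf{Part (ii).} Steps 1 and 2 are fine: the sets where $f(\{x,y\})$ equals $x$, respectively $y$, are closed and partition $X^2\setminus\Delta$. But Steps 3 and 4, which carry all the content, are only announced.

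Your map in Step 3 does work, though not through $X\setminus\{x,z\}$, which need not be connected. Take $\phi(t)=f(\{x,z,t\})$ and assume $f(\{x,y,z\})=x$. The closed set $\phi^{-1}(x)\ni y$ is disjoint from the closed set $\phi^{-1}(z)\cup\{t:\phi(t)=t\}\ni x$ precisely because $\phi(x)=f(\{x,z\})=z$. These two sets cover $X$, which disconnects $X$.

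In Step 4, transitivity cannot give $f(F)=\min F$ for finite $F$, since it only concerns $f$ on $\mathcal{F}_2(X)$. For instance, on the discrete space $\N$ you may take $f=\min$ except at $\{1,2,3\}$, which is isolated in $\mathcal{F}(\N)$. A second connectedness argument is needed. Suppose $m=\min F\neq q=f(F)$. Pick $r\in F\setminus\{m,q\}$ and let $G=F\setminus\{r\}$, so $f(G)=m$ by induction. Set $\phi(t)=f(G\cup\{t\})$. Then $\phi^{-1}(G\setminus\{m\})\ni r$ and $\phi^{-1}(m)\cup\{t:\phi(t)=t\}\ni m$ are disjoint closed sets covering $X$, because $\phi(t)=m\neq t$ for every $t\in G\setminus\{m\}$.

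Finally, the passage to arbitrary $S\in\mathcal{F}(X)$ needs a specific device rather than a bare appeal to continuity. Given $s\in S$, every $\tau_V$-neighbourhood of $S$ contains a finite $F\subseteq S$ with $s\in F$. For such $F$ we have $f(F)\in(\leftarrow,s]_{\leq_f}$, a closed set, so $f(S)\leq_f s$.
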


Subsequently, Engelking, Heath and Michael
\cite{engelking-heath-michael:68} introduced the term
``\emph{topologically well-orderable}'' space to denote an orderable
space $X$ (by linear ordering $\leq$) such that every nonempty closed
subset of $X$ has a first $\leq$-element. In the same paper
\cite{engelking-heath-michael:68}, they extended this term to
suborderable spaces, saying that such a space $X$ is
\emph{topologically well-suborderable} (a \emph{topologically
  well-ordered subset}, in their terminology) if $X$ is a suborderable
space with respect to a linear order $\leq$ such that every nonempty
closed subset of $X$ has a first $\leq$-element.  In
\cite{gutev-nogura:01a}, this term was also extended to weakly
orderable spaces. Namely, we will say that a space $X$ is
\emph{topologically weakly well-orderable} (\emph{Sorgenfrey
  well-ordered} in the terminology of \cite{gutev-nogura:01a}) if $X$
is a weakly orderable space with respect to a linear order $\leq$ such
that every nonempty closed subset of $X$ has a first
$\leq$-element. Clearly, in these terms,
\Cref{theorem-PM-Points-v17:1} states that each topologically weakly
well-orderable space $X$ has a continuous selection for
$\mathcal{F}(X)$. Similarly, this theorem also implies that a
connected space $X$ is topologically weakly well-orderable if and only
if $\sel[\mathcal{F}(X)]\neq \emptyset$.\medskip

For a space $X$, the set $\mathcal{F}(X)$ is partially ordered with
respect to the usual set-theoretic inclusion. The role of the
selection in \Cref{theorem-PM-Points-v17:1} is naturally related to
this fact. Namely, let us recall that a selection
$f:\mathcal{F}(X)\to X$ is \emph{monotone} \cite{gutev-nogura:01a} if
$f(S)=f(T)$ for every $S,T\in \mathcal{F}(X)$ with
$f(T)\in S\subseteq T$. It was shown in \cite[Proposition
4.8]{gutev-nogura:01a} that the relation $\leq_f$ is a linear order on
$X$ for each monotone selection $f:\mathcal{F}(X)\to X$. This property
is crucial for the description of continuous monotone selections. To
this end, for a linear order $\leq$ on a set $X$, we will use the
standard notation for the intervals generated by this order. Namely,
$(\leftarrow, p)_{\leq}$ will stand for the $\leq$-interval of all
$x\in X$ with $x< p$; $(\leftarrow, p]_{\leq}$ for that of all
$x\in X$ with $x\leq p$; the $\leq$-intervals $(p,\to)_{\leq}$,
$[p,\to)_{\leq}$, $(p,q)_{\leq}$, $[p,q]_{\leq}$, etc., are likewise
defined.\medskip

The following characteristic property regarding continuity of a
monotone selection ${f:\mathcal{F}(X)\to X}$ was obtained in the proof
of \cite[Proposition 4.7]{gutev-nogura:01a}, but the proposition
itself is formulated incompletely since the requirement for weak
orderability of the space $X$ with respect to the linear order
$\leq_f$ was not explicitly stated.

\begin{proposition}
  \label{proposition-PM-Points-v15:1}
  Let $X$ be a space and $f:\mathcal{F}(X)\to X$ be a monotone
  selection. Then $f\in\sel[\mathcal{F}(X)]$ if and only if\/ $X$ is
  weakly orderable with respect to\/ $\leq_f$.
\end{proposition}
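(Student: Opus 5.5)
The plan is to prove the two implications separately, taking as given (from \cite[Proposition 4.8]{gutev-nogura:01a}) that $\leq_f$ is a linear order on $X$. The first step is the observation that for a monotone $f$ and every $S\in\mathcal{F}(X)$, the point $f(S)$ is the first $\leq_f$-element of $S$. Indeed, for $x\in S$ the set $T=\{f(S),x\}$ is closed and satisfies $f(S)\in T\subseteq S$, so monotonicity gives $f(T)=f(S)$, i.e.\ $f(S)\leq_f x$.

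\emph{Sufficiency.} Suppose $X$ is weakly orderable with respect to $\leq_f$. By the first step, $f(S)$ is the first $\leq_f$-element of every $S\in\mathcal{F}(X)$. Hence \Cref{theorem-PM-Points-v17:1}\ref{item:PM-Points-v17:1} applies directly and yields $f\in\sel[\mathcal{F}(X)]$.

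\emph{Necessity.} Suppose $f$ is continuous. We must show that the open-interval topology of $\leq_f$ is coarser than the topology of $X$, that is, that every ray $(\leftarrow,p)_{\leq_f}$ and $(p,\to)_{\leq_f}$ is open in $X$.
\begin{itemize}
\item For $(p,\to)_{\leq_f}$: this is $\{x\ne p: f(\{x,p\})=p\}$. The map $x\mapsto\{x,p\}$ from $X$ to $\mathcal{F}(X)$ is Vietoris continuous, so $x\mapsto f(\{x,p\})$ is continuous. For $x\ne p$, separate $x$ and $p$ by disjoint open sets $U\ni x$ and $W\ni p$. By continuity there is a neighbourhood $U'\subseteq U$ of $x$ with $f(\{y,p\})\in W$, hence $f(\{y,p\})=p$, for all $y\in U'$. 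This is the usual openness argument for weak selections.
\item For $(\leftarrow,p)_{\leq_f}$: this is $\{x\ne p: f(\{x,p\})=x\}$. Given such $x$, choose $U\ni x$ and $W\ni p$ disjoint. Continuity gives a neighbourhood $U'\subseteq U$ of $x$ with $f(\{y,p\})\in U$, hence $f(\{y,p\})=y$, for $y\in U'$.
\end{itemize}
So both rays are open, which gives weak orderability.

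The expected obstacle is deciding whether continuity on $\mathcal{F}_2(X)$ alone is enough here, because in general $\leq_\sigma$ for a continuous weak selection fails to be transitive. In the present setting transitivity is already supplied by monotonicity, and openness of the rays uses only continuity on two-point sets. So the necessity direction reduces to the standard fact that the rays of a continuous weak selection are open, and the only real care needed is the Hausdorff separation when $y$ approaches $x$.
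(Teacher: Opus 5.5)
Your proof is correct. The necessity half is the paper's argument. The paper simply cites Michael \cite[7.2.1 of Lemma 7.2]{michael:51} for the fact that the rays $(p,\to)_{\leq_f}$ and $(\leftarrow,p)_{\leq_f}$ are open for any continuous selection. You write that argument out via the Vietoris continuous map $x\mapsto\{x,p\}$ and a Hausdorff separation. One small fix: in the first bullet, start from $x\in(p,\to)_{\leq_f}$ rather than an arbitrary $x\neq p$, since the continuity step needs $f(\{x,p\})=p\in W$.

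The sufficiency half takes a somewhat different route. The paper defers to the continuity argument inside the proof of \cite[Theorem 4.7]{gutev-nogura:01a}, which treats monotone selections with open $\leq_f$-rays directly. You instead first get, from the definition of monotonicity alone, that $f(S)$ is the $\leq_f$-first element of $S$; you do this by applying monotonicity to $f(S)\in\{f(S),x\}\subseteq S$. You then invoke Michael's criterion, part \ref{item:PM-Points-v17:1} of \Cref{theorem-PM-Points-v17:1}.

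What your route buys is that the proposition follows from results already stated in the paper, with no appeal to the inner workings of another proof. It also makes explicit, with no continuity assumption, the implication from monotonicity to the first-element property, which is the forward half of \Cref{theorem-PM-Points-v15:1}. The paper's citation keeps the argument tied to the source where the monotone case was first handled. Both routes rest on the same mechanism: once $f(S)$ is always the first element and the rays are open, Vietoris continuity follows.
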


\begin{proof}
  The weak orderability of $X$ with respect to the linear order
  $\leq_f$ is equivalent to the property that the $\leq_f$-intervals
  $(p,\to)_{\leq_f}$ and $(\leftarrow,p)_{\leq_f}$, $p\in X$, are open
  in $X$. As shown in \cite[7.2.1 of Lemma 7.2]{michael:51}, these
  intervals are open for any continuous selection for
  $\mathcal{F}(X)$. This lemma from the aforementioned Michael's paper
  \cite{michael:51} was formulated for connected spaces and continuous
  weak selections, but connectedness did not play any role in the
  proof of this property. Conversely, for a monotone selection
  $f:\mathcal{F}(X)\to X$ such that the $\leq_f$-intervals
  $(p,\to)_{\leq_f}$ and $(\leftarrow,p)_{\leq_f}$, $p\in X$, are open
  in $X$, the continuity of $f$ was shown in the proof of
  \cite[Theorem 4.7]{gutev-nogura:01a}.
\end{proof}

Regarding Proposition \ref{proposition-PM-Points-v15:1}, let us remark
that there are simple examples of discontinuous monotone
selections. For instance, take $X=[0,2)$ and define a monotone
selection $f:\mathcal{F}(X)\to X$ by $f(S)=\min S\cap [1,2)$ if
$S\cap [1,2)\neq \emptyset$, and $f(S)=\min S$ otherwise. Then
$\leq_f$ is a linear order on $X$ which is the usual order on $[0,1)$
and $[1,2)$, and $x\leq_f y$ for every $x\in [1,2)$ and $y\in
[0,1)$. This selection is not continuous.\medskip

Continuous monotone selections are implicitly present in
\Cref{theorem-PM-Points-v17:1}, as shown in \cite[Lemma 4.9 and
Corollary 4.10]{gutev-nogura:01a}.

\begin{theorem}[\cite{gutev-nogura:01a}]
  \label{theorem-PM-Points-v15:1}
  Let $X$ be a space. Then a continuous selection
  $f:\mathcal{F}(X)\to X$ is monotone if and only if\/ $\leq_f$ is a
  linear order on $X$ such that $f(S)$ is the first\/ $\leq_f$-element
  of\/ $S$ for every $S\in \mathcal{F}(X)$.
\end{theorem}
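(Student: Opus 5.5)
We will prove both implications directly from the definition of monotonicity ($f(S)=f(T)$ whenever $f(T)\in S\subseteq T$) and of $\leq_f$ ($x\leq_f y$ iff $f(\{x,y\})=x$). Continuity of $f$ will only be used in the forward direction, through the fact from \cite{gutev-nogura:01a} quoted before \Cref{proposition-PM-Points-v15:1}: for a monotone selection the relation $\leq_f$ is a linear order. The converse direction is purely set-theoretic.

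\emph{Necessity.} Let $f$ be continuous and monotone. By \cite[Proposition 4.8]{gutev-nogura:01a}, $\leq_f$ is a linear order on $X$. Fix $S\in\mathcal{F}(X)$, set $p=f(S)$, and take any $x\in S$. The set $\{p,x\}$ is closed because $X$ is Hausdorff, and $f(S)=p\in\{p,x\}\subseteq S$. Monotonicity then gives $f(\{p,x\})=f(S)=p$, that is, $p\leq_f x$. Hence $p$ is the first $\leq_f$-element of $S$.

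\emph{Sufficiency.} Suppose $\leq_f$ is a linear order and $f(S)$ is the $\leq_f$-first element of $S$ for every $S\in\mathcal{F}(X)$. Let $f(T)\in S\subseteq T$ with $S,T$ closed. Since $f(T)$ is $\leq_f$-below every element of $T$, it is also below every element of $S\subseteq T$. As $f(T)\in S$, it is the first element of $S$, and therefore $f(S)=f(T)$. So $f$ is monotone.

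The only step needing care is the transitivity of $\leq_f$ in the necessity direction. We do not reprove it but invoke \cite[Proposition 4.8]{gutev-nogura:01a}, which the paper already cites. If a self-contained argument were required, one would take $x\leq_f y\leq_f z$ and apply monotonicity to the triple $T=\{x,y,z\}$. If $f(T)=y$, then monotonicity on $\{x,y\}\subseteq T$ forces $f(\{x,y\})=y$, so $y\leq_f x$; antisymmetry then gives $x=y$, and $x\leq_f z$ follows trivially. If $f(T)=z$, the same argument on $\{y,z\}$ gives $y=z$. Otherwise $f(T)=x$, and monotonicity on $\{x,z\}$ gives $f(\{x,z\})=x$, i.e.\ $x\leq_f z$.
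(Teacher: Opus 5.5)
Your proof is correct, but it takes a different route from the paper, which does not prove this statement at all. The paper quotes the result from \cite[Lemma 4.9 and Corollary 4.10]{gutev-nogura:01a}, and likewise takes the linearity of $\leq_f$ for monotone selections from \cite[Proposition 4.8]{gutev-nogura:01a}. You instead verify the equivalence directly from the definitions, and each step checks out:
\begin{enumerate}
\item In the forward direction, you apply monotonicity to $f(S)\in\{f(S),x\}\subseteq S$.
\item In the converse, you use uniqueness of the least element of $S$.
\item Your case analysis on $T=\{x,y,z\}$ correctly supplies the transitivity that the paper outsources.
\end{enumerate}
Your framing needs one correction: continuity is never used. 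Proposition 4.8 of \cite{gutev-nogura:01a} is stated (and quoted in the paper) for arbitrary monotone selections. Your own transitivity argument only needs finite sets to be closed. So what you have actually shown is that the equivalence holds for every selection $f:\mathcal{F}(X)\to X$ on a $T_1$ space. Continuity becomes relevant elsewhere, for instance in \Cref{proposition-PM-Points-v15:1}, where it is tied to weak orderability of $X$ with respect to $\leq_f$. The citation keeps the exposition short and keeps the result in the setting of \cite{gutev-nogura:01a}. Your argument is self-contained and makes plain that the statement is purely order-theoretic.
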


Thus, by \Cref{theorem-PM-Points-v17:1,theorem-PM-Points-v15:1}, each
continuous selection ${f:\mathcal{F}(X)\to X}$ for a connected space
$X$ is monotone.  Another aspect of the selection problem for
connected spaces is naturally related to an interesting
characterisation of topologically weakly well-orderable spaces in
\cite{gutev-nogura:01a}. To state it, for a linear order $\leq$ on a
set $X$, we will use $\mathcal{T}_\leq$ to denote the open interval
topology on $X$ generated by this order. The following result,
complementary to \Cref{theorem-PM-Points-v17:1}, was obtained in
\cite[Theorem 5.1]{gutev-nogura:01a}.

\begin{theorem}[\cite{gutev-nogura:01a}]
  \label{theorem-hsp-ver-8:3}
  Let $X$ be a weakly orderable space and $\leq$ be a compatible
  linear order on it. Then each nonempty closed subset of\/ $X$ has a
  first $\leq$-element if and only if
\begin{enumerate}[label=\upshape{(\roman*)}]
\item\label{item:PM-Points-v18:1} $(X, \mathcal{T}_\leq)$ is a
  topologically well-orderable space, and
\item\label{item:PM-Points-v18:2} If $p\in X$ is not the last
  $\leq$-element of $X$ and $U\subseteq X$ is an open set with
  $p\in U$, then there exists a point $q\in X$ such that $p< q$ and
  $[p,q)_\leq\subseteq U$.
\end{enumerate}
\end{theorem}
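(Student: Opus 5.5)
We prove the two implications separately, writing $\mathcal{T}$ for the topology of $X$. Weak orderability gives $\mathcal{T}_\leq\subseteq\mathcal{T}$.

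\emph{Necessity.} Assume every nonempty $\mathcal{T}$-closed subset of $X$ has a first $\leq$-element.

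For \ref{item:PM-Points-v18:1}: since $\mathcal{T}_\leq\subseteq\mathcal{T}$, every $\mathcal{T}_\leq$-closed set is $\mathcal{T}$-closed. Hence each nonempty $\mathcal{T}_\leq$-closed set has a first element, and $(X,\mathcal{T}_\leq)$ is topologically well-orderable.

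For \ref{item:PM-Points-v18:2}: let $p$ be not the last element and let $U\in\mathcal{T}$ contain $p$. The set $F=[p,\to)_\leq\setminus U$ is $\mathcal{T}$-closed, because $[p,\to)_\leq=X\setminus(\leftarrow,p)_\leq$ is $\mathcal{T}_\leq$-closed.
\begin{itemize}
\item If $F=\emptyset$, choose any $q>p$. Then $[p,q)_\leq\subseteq[p,\to)_\leq\subseteq U$.
\item If $F\neq\emptyset$, let $q$ be the first element of $F$. Then $q>p$, since $p\in U$, and $[p,q)_\leq\subseteq U$ by minimality of $q$.
\end{itemize}

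\emph{Sufficiency.} Assume \ref{item:PM-Points-v18:1} and \ref{item:PM-Points-v18:2}, and let $S\in\mathcal{F}(X)$ be nonempty. Let $\overline{S}$ denote the $\mathcal{T}_\leq$-closure of $S$. By \ref{item:PM-Points-v18:1}, $\overline{S}$ has a first element $p$. Every $s\in S$ satisfies $s\ge p$, so $p$ is a lower bound of $S$. If $p\in S$, we are done, so suppose $p\notin S$; we derive a contradiction.

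\begin{itemize}
\item $p$ is not the last element of $X$. Otherwise $S\subseteq[p,\to)_\leq=\{p\}$, which forces $p\in S$.
\item Apply \ref{item:PM-Points-v18:2} to the $\mathcal{T}$-open set $U=X\setminus S\ni p$. This gives $q>p$ with $[p,q)_\leq\cap S=\emptyset$.
\item Since $p$ is a lower bound of $S$, no element of $S$ lies below $p$. Together with the previous step, this gives $S\subseteq[q,\to)_\leq$.
\item The interval $[q,\to)_\leq$ is $\mathcal{T}_\leq$-closed, so $\overline{S}\subseteq[q,\to)_\leq$. This contradicts $p\in\overline{S}$ and $p<q$.
\end{itemize}

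Hence $p\in S$, and $p$ is the first element of $S$.

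The main care point is in the sufficiency argument. One must take the closure of $S$ in the coarser topology $\mathcal{T}_\leq$, so that \ref{item:PM-Points-v18:1} supplies a candidate first element. Then \ref{item:PM-Points-v18:2} is used to bridge back to the finer topology $\mathcal{T}$, through the $\mathcal{T}$-open complement of $S$.
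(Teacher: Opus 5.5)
Your proof is correct in both directions. For necessity, $[p,\to)_\leq\setminus U$ is $\mathcal{T}$-closed because $\mathcal{T}_\leq\subseteq\mathcal{T}$, and its first element serves as $q$. For sufficiency, you take the first element $p$ of the $\mathcal{T}_\leq$-closure of $S$ and apply (ii) to the $\mathcal{T}$-open set $X\setminus S$; this correctly forces $p\in S$.

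The paper gives no proof of its own here, since the result is quoted from \cite[Theorem 5.1]{gutev-nogura:01a}, so there is no route to compare against. Your argument is a complete, self-contained verification. You also rightly read (i) as requiring first elements with respect to $\leq$ itself, which is the reading under which the statement is true.
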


In \cite{gutev-nogura:01a}, any topology on a space $X$ satisfying
\ref{item:PM-Points-v18:1} and \ref{item:PM-Points-v18:2} of
\Cref{theorem-hsp-ver-8:3} is called a \emph{Sorgenfrey modification}
of $\mathcal{T}_\leq$, which may also explain the synonym of a
topologically weakly well-ordered space as ``Sorgenfrey
well-ordered''. As for the justification of this terminology, it goes
back to the Sorgenfrey line which can be viewed as a Sorgenfrey
modification of the topology on the interval $[0,1)$, namely the
following supporting example was given in \cite[Example
5.2]{gutev-nogura:01a}.

\begin{example}[\cite{gutev-nogura:01a}]
  \label{example-hsp-ver-8:2}
  The Sorgenfrey line is a Sorgenfrey modification of the interval
  $[0,1)$ and is therefore a topologically well-suborderable space.
\end{example}

Evidently, each \emph{ordinal space} (i.e.\ an ordinal equipped with
the open interval topology) is topologically well-orderable; also each
compact weakly orderable space is both orderable and topologically
well-orderable. Conversely, as shown in \cite[Lemmas 4.1, 4.2 and
4.3]{engelking-heath-michael:68}, each topologically well-orderable
space is locally compact. Moreover, each nonempty closed subset of a
topologically well-orderable space is also topologically
well-orderable, and every closed discrete subset of it is
countable. According to
\Cref{theorem-PM-Points-v15:1,theorem-hsp-ver-8:3}, this implies the
following consequence.

\begin{corollary}
  \label{corollary-PM-Points-v18:1}
  If $X$ is a space, $f:\mathcal{F}(X)\to X$ is a continuous monotone
  selection and $\mathcal{T}_f=\mathcal{T}_{\leq_f}$ is the open
  interval topology generated by the linear order $\leq_f$, then
  $\left(X,\mathcal{T}_{f}\right)$ is a locally compact space such
  that each closed discrete subset of it is countable.
\end{corollary}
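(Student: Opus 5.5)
The plan is to chain \Cref{theorem-PM-Points-v15:1}, \Cref{proposition-PM-Points-v15:1} and \Cref{theorem-hsp-ver-8:3}, and then quote the facts from \cite{engelking-heath-michael:68} recalled just before the statement. Let $f:\mathcal{F}(X)\to X$ be a continuous monotone selection.

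First, by \Cref{theorem-PM-Points-v15:1}, $\leq_f$ is a linear order on $X$. Moreover, $f(S)$ is the first $\leq_f$-element of $S$ for every $S\in\mathcal{F}(X)$, so every nonempty closed subset of $X$ has a first $\leq_f$-element. Second, by \Cref{proposition-PM-Points-v15:1}, the continuity of $f$ gives that $X$ is weakly orderable with respect to $\leq_f$. In other words, $\leq_f$ is a compatible linear order and $\mathcal{T}_f$ is coarser than the topology of $X$.

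These are exactly the hypotheses of \Cref{theorem-hsp-ver-8:3}, applied with $\leq=\leq_f$. Its forward direction gives condition \ref{item:PM-Points-v18:1}: $(X,\mathcal{T}_f)$ is a topologically well-orderable space. Now I invoke \cite[Lemmas 4.1 and 4.3]{engelking-heath-michael:68}, as recalled above. Every topologically well-orderable space is locally compact, and every closed discrete subset of it is countable. Both conclusions are about the space $(X,\mathcal{T}_f)$, which is what the corollary asserts, so the proof is complete.

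The only point needing care is to keep the two topologies apart. The claims concern closed discrete subsets with respect to $\mathcal{T}_f$, not the original topology, and that is precisely what the cited lemmas deliver for the topologically well-orderable space $(X,\mathcal{T}_f)$. There is no real obstacle beyond checking that the hypotheses of \Cref{theorem-hsp-ver-8:3} are met, namely weak orderability together with the existence of first elements, and both come from the two earlier results.
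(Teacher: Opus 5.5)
Your proposal is correct and follows the paper's own route: the paper derives the corollary directly from \Cref{theorem-PM-Points-v15:1,theorem-hsp-ver-8:3} together with the Engelking--Heath--Michael facts recalled just before the statement. Your appeal to \Cref{proposition-PM-Points-v15:1}, which supplies the weak orderability hypothesis of \Cref{theorem-hsp-ver-8:3}, only spells out a step the paper leaves implicit.
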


Finally, let us also comment on the importance of condition
\ref{item:PM-Points-v18:2} in \Cref{theorem-hsp-ver-8:3}. Namely, as
shown in \cite[Example 5.3]{gutev-nogura:01a}, there exists a
connected separable metrizable space $X$ which is weakly orderable,
$(X,\mathcal{T}_\leq)$ is a topologically well-orderable space for any
compatible linear order $\leq$ on it, but $X$ is not topologically
weakly well-orderable.\medskip

For a connected space $X$, \Cref{theorem-hsp-ver-8:3} also has some
interesting consequences regarding the set $\ocs(X)$, defined as in
\cref{eq:Sequences-v2:1}.  To this end, let us recall that a point
$p\in X$ in a connected space $X$ is \emph{cut} if $X\setminus\{p\}$
is not connected, otherwise $p\in X$ is called \emph{noncut}. In what
follows, we will use $\cut(X)$ for the set of the cut points of $X$,
and $\noncut(X)$\,---\,for that of the noncut points of $X$. An
alternative definition of a cut point $p\in X$ is that
${\overline{U}\cap \overline{V}=\{p\}}$ for some (open) subsets
${U,V\subseteq X}$. It will be convenient to call any such pair
$(U,V)$ of subsets ${U,V\subseteq X}$ a \emph{$p$-cut} of $X$. A (cut)
point $p\in X$ is said to \emph{separate} $x,y\in X$ if $x\in U$ and
$y\in V$ for some $p$-cut $(U,V)$ of $X$. If $p$ separates $x$ and
$y$, then neither $x$ nor $y$ separates the other two points, see
\cite[Lemma 2.1]{MR0339099}.  In these terms, a connected space $X$ is
weakly orderable if and only if among every three points of $X$ there
is one that separates the other two, see \cite[Theorem 4.1]{MR0339099}
(in a footnote of \cite{MR0235524}, the result was credited to
D. Zaremba-Szczepkowicz). Evidently, this property incorporates the
fact that $|\noncut(X)|\leq 2$ for every weakly orderable connected
space $X$. Furthermore, as remarked in the Introduction, any connected
space $X$ with $\sel[\mathcal{F}(X)]\neq \emptyset$ is weakly
orderable and $\ocs(X)\subseteq \noncut(X)$.\medskip

Here, using \Cref{theorem-PM-Points-v17:1} and
\ref{item:PM-Points-v18:2} from \Cref{theorem-hsp-ver-8:3}, we will
refine this result by examining the subtle difference between
\emph{cut points} and \emph{endpoints}. Namely, the \emph{order}
$\ord(p,X)$ of a point $p$ in a space $X$ in the sense of
Menger-Urysohn is the least cardinal $\lambda$ with the property that
every open set $U\subseteq X$ with $p\in U$ contains an open set
$V\subseteq X$ such that $p\in V$ and $|\partial V|\leq
\lambda$. Here, $\partial V$ is the \emph{boundary} of $V$. In these
terms, a space $X$ is \emph{zero-dimensional} if $\ord(p,X)=0$ for
every $p\in X$ or in other words, when it has a clopen base. \medskip

Let $X$ be a connected space and $p\in X$. Then $\ord(p,X)\geq 1$ and
$\ord(p,X)=1$ precisely when for every open set $U\subseteq X$ with
$p\in U$, there exists an open set $V\subseteq X$ such that
$p\in V\subseteq U$ and $\overline{V}= V\cup\{q\}$ for some point
$q\in X\setminus V$.  In the special case of $\ord(p,X)=1$, the point
$p\in X$ is called an \emph{endpoint} and we will use $\pend(X)$ to
denote the endpoints of $X$. It is easy to see that
$\pend(X)\subseteq \noncut(X)$ because ${\ord(p,X)\geq 2}$ for every
$p\in\cut(X)$. However, the converse is not true even when $X$ is
weakly orderable. For instance, $(0,0)$ is a noncut point of the
\emph{topological sine curve}
\begin{equation}
  \label{eq:PM-Points-v14:1}
  T=\{(0,0)\}\cup \left\{\left(t,\sin \tfrac1t\right): 0<t\leq
    1\right\},
\end{equation}
but it is not an endpoint of $T$. We now have the following natural
result.

\begin{proposition}
  \label{proposition-Sequences-v6:1}
  If\/ $X$ is a connected space, then\/ $\ocs(X)\subseteq \pend(X)$.
\end{proposition}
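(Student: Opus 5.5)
Fix $f\in\sel[\mathcal{F}(X)]$ and set $p=f(X)$; the goal is $\ord(p,X)=1$. By \Cref{theorem-PM-Points-v17:1}\ref{item:PM-Points-v17:2}, $\leq_f$ is a linear order, $X$ is weakly orderable with respect to it, and $f(S)$ is the first $\leq_f$-element of $S$. In particular $p$ is the first $\leq_f$-element of $X$, and every nonempty closed set has a first $\leq_f$-element. So \Cref{theorem-hsp-ver-8:3} applies, and its condition \ref{item:PM-Points-v18:2} is the key tool. Since $X$ is infinite, $p$ is not the last element. Hence for every open $U\ni p$ there is $q$ with $p<_f q$ and $[p,q)_{\leq_f}\subseteq U$.

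Set $V=[p,q)_{\leq_f}=(\leftarrow,q)_{\leq_f}$, which is open because $X$ is weakly orderable with respect to $\leq_f$. We have $p\in V\subseteq U$, so it remains to show $\overline{V}=V\cup\{q\}$. First, $(q,\to)_{\leq_f}$ is open and disjoint from $V$, so $\overline{V}\subseteq(\leftarrow,q]_{\leq_f}=V\cup\{q\}$. Second, $V$ is not closed: otherwise it would be a nonempty proper clopen subset of the connected space $X$, since $p\in V$ and $q\notin V$. Therefore $q\in\overline{V}$ and $\overline{V}=V\cup\{q\}$, which gives $|\partial V|=1$. As $U$ was arbitrary, $\ord(p,X)\leq 1$.

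Finally, $\ord(p,X)\geq 1$ because $X$ is connected and not a singleton, so no proper neighbourhood of $p$ has empty boundary. Hence $\ord(p,X)=1$ and $p\in\pend(X)$.

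The main obstacle is only to check that \Cref{theorem-hsp-ver-8:3} legitimately applies, namely that the compatible order is $\leq_f$ and that closed sets have first elements. Both follow directly from \Cref{theorem-PM-Points-v17:1}\ref{item:PM-Points-v17:2}. Everything else is routine.
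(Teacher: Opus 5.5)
Your proof is correct, but it builds the neighbourhood $V$ by a different route than the paper.

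\textbf{Your route.} You work entirely inside the order structure. \Cref{theorem-PM-Points-v17:1}\ref{item:PM-Points-v17:2} gives weak orderability with respect to $\leq_f$ and first elements of closed sets. Condition \ref{item:PM-Points-v18:2} of \Cref{theorem-hsp-ver-8:3} then supplies $q$. The openness of $V=(\leftarrow,q)_{\leq_f}$ and the inclusion $\overline{V}\subseteq V\cup\{q\}$ are read off from the openness of rays.

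\textbf{The paper's route.} It puts $A=X\setminus U$, considers the continuous map $h(x)=f(A\cup\{x\})$, and takes $V=h^{-1}(U)$ and $H=\{x\in X: h(x)=x\}$. Here $V$ is open by continuity of $h$, and $H$ is closed because $X$ is Hausdorff. The identity $H=V\cup\{f(A)\}$ follows from the selection property alone, so Michael's theorem is used only to see that $p\in V$.

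\textbf{Comparison.} If you choose $q=f(X\setminus U)$, the $\leq_f$-first element of $X\setminus U$, the two constructions give the same set $V=(\leftarrow,f(A))_{\leq_f}$. This also shows that the direction of \Cref{theorem-hsp-ver-8:3} you need is immediate, so citing that theorem is heavier than necessary. The paper's route gains generality: it only uses $f(A\cup\{p\})=p$. Hence it shows $\ord(p,X)\leq 1$ for every point $p$ admitting a $p$-maximal selection, in an arbitrary space. Your argument instead depends on weak orderability with respect to $\leq_f$, which here comes from connectedness via Michael's theorem. Your version, on the other hand, makes the endpoint structure transparent by exhibiting $V$ explicitly as an initial $\leq_f$-interval.
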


\begin{proof}
  Let $f\in \sel[\mathcal{F}(X)]$ and $U\subseteq X$ be an open set
  such that $A=X\setminus U\neq \emptyset$ and $p=f(X)\in U$. Next,
  define a map $h:X\to X$ by $h(x)=f(A\cup\{x\})$ for every $x\in
  X$. Then $h$ is continuous because both the set-valued mapping
  $X\ni x\longrightarrow A\cup\{x\}\in \mathcal{F}(X)$ and the map
  $f:\mathcal{F}(X)\to X$ are continuous with respect to the Vietoris
  topology on $\mathcal{F}(X)$. Finally, let $V=h^{-1}(U)$ and
  $H=\{x\in X: h(x)=x\}$.  Since $f$ is a selection for
  $\mathcal{F}(X)$ and $f(A\cup\{x\})=f(A)$ for every $x\in A$, it
  follows that $H\cap A=\{f(A)\}$ and $V\cup\{f(A)\}=H$. Furthermore,
  from the definition of the relation $\leq_f$ and
  \ref{item:PM-Points-v17:2} of \Cref{theorem-PM-Points-v17:1}, we
  also have that $p\in V=H\setminus A\subseteq U$.  Therefore,
  $\ord(p,X)=1$ because $H$ is closed in $X$ and $V$ is open in $X$.
\end{proof}

Regarding the proper place of \Cref{proposition-Sequences-v6:1}, let
us explicitly remark that not every endpoint of $X$ belongs to
$\ocs(X)$.

\begin{example}
  \label{example-Sequences-v8:1}
  Let
  $X=\left\{(s,0): s\in [-1,0]\right\}\cup \left\{(t,\sin 1/t):
    0<t\leq 1\right\}$. Then $X$ is a connected weakly orderable space
  with respect to the usual order on $[-1,1]$ because the projection
  $\pi:X\to [-1,1]$ is continuous and bijective. Furthermore, the
  maximal element of any $S\in \mathcal{F}(X)$ with respect to this
  order determines a continuous selection for $\mathcal{F}(X)$, which
  follows by using \Cref{theorem-PM-Points-v17:1} with the inverse
  linear order on $X$. In contrast, not every $S\in \mathcal{F}(X)$
  has a minimal element with respect to this order on $X$. For
  instance, such a set is
  ${S=\left\{\left(\frac2{(4k+1)},\sin\frac{(4k+1)\pi}2\right): k\in
      \N\right\}}$. Therefore, applying \Cref{theorem-PM-Points-v17:1}
  once again, there is no continuous selection $f:\mathcal{F}(X)\to X$
  with $f(X)= (-1,0)$.
\end{example}

\Cref{proposition-Sequences-v6:1} and \Cref{example-Sequences-v8:1}
suggest a complete description of the property of a connected weakly
orderable space $X$ to have a continuous selection for
$\mathcal{F}(X)$. It will be based on the following characteristic
property of those noncut points of $X$ that are endpoints, compare
with \Cref{theorem-hsp-ver-8:3}.

\begin{proposition}
  \label{proposition-Sequences-v8:1}
  Let $X$ be a weakly orderable connected space and\/ $\leq$ be a
  compatible linear order on it. If $p\in X$ is the first
  $\leq$-element of $X$, then $\ord(p,X)=1$ if and only if for every
  neighbourhood $U$ of $p$ there exists a point $q\in X\setminus\{p\}$
  such that $[p,q)_{\leq}\subseteq U$.
\end{proposition}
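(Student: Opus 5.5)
We prove the two implications separately, using only that $X$ is connected and that every $\leq$-interval of the form $(\leftarrow,x)_\leq$ or $(x,\to)_\leq$ is open in $X$ (weak orderability).

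\emph{Sufficiency.} Suppose every neighbourhood $U$ of $p$ contains an interval $[p,q)_\leq$ with $q\neq p$; note $q>p$ because $p$ is the first element. Fix an open $U\ni p$ and such a $q$, and set $V=[p,q)_\leq=(\leftarrow,q)_\leq$, which is open since $p$ is the first element.
\begin{itemize}
\item The set $[p,q]_\leq=X\setminus(q,\to)_\leq$ is closed, so $\overline V\subseteq V\cup\{q\}$.
\item $V$ is not closed: it is a nonempty open proper subset of the connected space $X$ (it misses $q$).
\item Hence $\overline V=V\cup\{q\}$, so $|\partial V|=1$.
\end{itemize}
Since $U$ was arbitrary, $\ord(p,X)\leq 1$. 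Also $\ord(p,X)\geq 1$, since a connected infinite Hausdorff space has no proper nonempty clopen set. Therefore $\ord(p,X)=1$.

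\emph{Necessity.} Suppose $\ord(p,X)=1$ and let $U\ni p$ be open. Since $X$ is infinite Hausdorff, we may shrink $U$ so that $X\setminus U\neq\emptyset$. Choose an open $V$ with $p\in V\subseteq U$ and $\overline V=V\cup\{q\}$, where $q\notin V$. We claim $V=[p,q)_\leq$, which proves the statement.

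\emph{Step 1.} Both $(\leftarrow,q)_\leq$ and $(q,\to)_\leq$ are connected. This is the fact, cited in the Introduction via \cite[Theorem 1.3]{MR0339099}, that intervals of a connected weakly orderable space are connected.

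\emph{Step 2.} $V\subseteq(\leftarrow,q)_\leq$. Suppose $V$ meets $(q,\to)_\leq$. Then $V\cap(q,\to)_\leq$ is open, and it is closed in $(q,\to)_\leq$ because $\overline V\cap(q,\to)_\leq=V\cap(q,\to)_\leq$ (the only extra closure point $q$ lies outside this interval). By connectedness, $(q,\to)_\leq\subseteq V$. Since $p<q$, also $p\in V\cap(\leftarrow,q)_\leq$, and the same argument gives $(\leftarrow,q)_\leq\subseteq V$. Then $X\setminus V=\{q\}$, so $V$ is dense and $\overline V=X\subseteq U$, contradicting $X\setminus U\neq\emptyset$.

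\emph{Step 3.} $(\leftarrow,q)_\leq\subseteq V$. By Step 2, $V\subseteq(\leftarrow,q)_\leq$. Within $(\leftarrow,q)_\leq$ the set $V$ is nonempty (it contains $p$), open, and relatively closed, since $\overline V\cap(\leftarrow,q)_\leq=V$. Connectedness of $(\leftarrow,q)_\leq$ (Step 1) gives $V=(\leftarrow,q)_\leq=[p,q)_\leq\subseteq U$.

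The main delicacy is Step 2: it needs $X\setminus U\neq\emptyset$ to exclude $V$ covering $X\setminus\{q\}$, together with the connectedness of the open rays from Step 1.
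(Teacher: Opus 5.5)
Your sufficiency argument is correct and matches the paper's ``obvious'' direction. The necessity argument, however, breaks in Step~2. From $X\setminus V=\{q\}$ you conclude $\overline V=X\subseteq U$. You only know $V\subseteq U$, not $\overline V\subseteq U$. All you actually get is $X\setminus\{q\}\subseteq U\neq X$, i.e.\ $U=V=X\setminus\{q\}$, which is not a contradiction.

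In fact the claim $V=[p,q)_{\leq}$ is false. Take $X=[0,1]$ with the usual order, $p=0$, $q=1/2$ and $U=V=[0,1]\setminus\{1/2\}$. Then $V$ is open, $p\in V\subseteq U$, $X\setminus U\neq\emptyset$ and $\overline V=V\cup\{q\}$. So $V$ is a legitimate choice in the definition of $\ord(p,X)=1$, yet $V\not\subseteq(\leftarrow,q)_{\leq}$. Step~3 relies on Step~2 through the identity $\overline V\cap(\leftarrow,q)_{\leq}=V$. Hence the necessity direction, as written, is not established. Your closing remark locates the delicacy correctly: $X\setminus U\neq\emptyset$ does not exclude $V=X\setminus\{q\}$.

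The repair is to drop Step~2 and the equality claim altogether, since you only need $[p,q)_{\leq}\subseteq V$. Run Step~3 on $V\cap(\leftarrow,q)_{\leq}$ instead of on $V$:
\begin{enumerate}
\item it is open in $(\leftarrow,q)_{\leq}$;
\item it contains $p$, because $p<q$;
\item it is relatively closed, because $\overline V\cap(\leftarrow,q)_{\leq}=(V\cup\{q\})\cap(\leftarrow,q)_{\leq}=V\cap(\leftarrow,q)_{\leq}$.
\end{enumerate}
Connectedness of the interval then gives $[p,q)_{\leq}=(\leftarrow,q)_{\leq}\subseteq V\subseteq U$. This is exactly the paper's argument. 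It needs neither the shrinking of $U$ nor the connectedness of $(q,\to)_{\leq}$.

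Alternatively, Step~2 can be rescued by shrinking $U$ so that $|X\setminus U|\geq 2$. Then $X\setminus V=\{q\}$ genuinely contradicts $V\subseteq U$. But this is superfluous.
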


\begin{proof}
  In one direction, this is obvious because the interval
  $[p,q)_{\leq}=(\leftarrow,q)_{\leq}$ is open and
  $\overline{[p,q)_{\leq}}= [p,q]_{\leq}=[p,q)_{\leq}\cup\{q\}$ for
  every for $q\neq p$. Conversely, suppose that $\ord(p,X)=1$ and
  $U\subseteq X$ is an open set with $p\in U$. Then by definition,
  there exists an open set $V\subseteq X$ and a point
  $q\in X\setminus V$ such that $p\in V\subseteq U$ and
  $\overline{V}=V\cup \{q\}$. It now suffices to show that
  $[p,q)_{\leq}\subseteq V$. To this end, let us observe that
  $[p,q)_{\leq}$ is a connected set being an interval, see
  \cite[Theorem 1.3]{MR0339099}. Moreover,
  $V\cap [p,q)_{\leq}=\overline{V}\cap [p,q)_{\leq}$ is a nonempty
  clopen subset of $[p,q)_{\leq}$. Therefore,
  $V\cap [p,q)_{\leq}=[p,q)_{\leq}$ and accordingly
  $[p,q)_{\leq}\subseteq V$ as required.
\end{proof}

If $X$ is a weakly orderable connected space and $\leq$ is a
compatible linear order on it, then each interval $[q,\to)_{\leq}$,
$q\in X$, is also a connected subset of $X$ and $q$ is a noncut point
of this interval. In case each $q\in X$ is also an endpoint of the
interval $[q,\to){_\leq}$, we have the following result.

\begin{theorem}
  \label{theorem-Sequences-v6:1}
  Let $X$ be a weakly orderable connected space and $\leq$ be a
  compatible linear order on it. If\/ $p\in X$ is the first\/
  $\leq$-element of $X$, then $p\in \ocs(X)$ if and only if each\/
  $q\in X$ is an endpoint of the interval\/ $[q,\to)_{\leq}$.
\end{theorem}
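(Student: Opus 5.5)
The plan is to reduce the statement to \Cref{theorem-hsp-ver-8:3} combined with \Cref{proposition-Sequences-v8:1}, applied to the tails $[q,\to)_\leq$.

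\textbf{Step 1: identify which orders can arise.} Since $X$ is connected, Eilenberg's theorem says that $\leq$ and its inverse are the only compatible linear orders on $X$. Now take $f\in\sel[\mathcal{F}(X)]$ with $f(X)=p$. By \ref{item:PM-Points-v17:2} of \Cref{theorem-PM-Points-v17:1}, $\leq_f$ is a compatible linear order and $f(S)$ is the $\leq_f$-first element of each $S$. Since $p$ is $\leq_f$-first, $\leq_f$ must equal $\leq$, because the inverse order would have $p$ as its last element. Conversely, by \ref{item:PM-Points-v17:1} of \Cref{theorem-PM-Points-v17:1}, if every $S\in\mathcal{F}(X)$ has a $\leq$-first element, then $S\mapsto\min_\leq S$ is continuous and sends $X$ to $p$. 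Hence
\[
p\in\ocs(X)\quad\Longleftrightarrow\quad\text{every nonempty closed subset of }X\text{ has a first }\leq\text{-element.}
\]

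\textbf{Step 2: apply \Cref{theorem-hsp-ver-8:3}.} It remains to show that the right-hand side is equivalent to each $q$ being an endpoint of $[q,\to)_\leq$. For the forward direction, fix $q$ that is not the last element; if $q$ is last, the interval is a single point, which we treat as trivial or excluded since $X$ is infinite. The interval $[q,\to)_\leq$ is connected and closed, and it is weakly orderable by the restricted order, with $q$ as its first element. Condition \ref{item:PM-Points-v18:2} of \Cref{theorem-hsp-ver-8:3} gives, for each neighbourhood $U$ of $q$, a point $q'>q$ with $[q,q')_\leq\subseteq U$. By \Cref{proposition-Sequences-v8:1} applied to $[q,\to)_\leq$, this says exactly that $\ord(q,[q,\to)_\leq)=1$.

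For the converse, \Cref{proposition-Sequences-v8:1} turns the endpoint hypothesis back into \ref{item:PM-Points-v18:2}; this uses that a neighbourhood of $q$ in $X$ restricts to a neighbourhood in the tail. We then still need \ref{item:PM-Points-v18:1}, namely that $(X,\mathcal{T}_\leq)$ is topologically well-orderable. Rather than prove this separately, I would directly show that a nonempty closed set $S$ has a first element.

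\textbf{Step 3: the main obstacle, $S$ has a first element.} Let $L=\{x: x<s\text{ for all }s\in S\}$. If $L=\emptyset$, then $p\in S$ and $p$ is the first element. Otherwise, suppose $S$ has no minimum. I claim $S=\bigcup_{s\in S}(s,\to)_\leq$; the inclusion $\subseteq$ holds because no $s$ is minimal, and $\supseteq$ is the point to check. Granting this, $X\setminus S\supseteq L$, and $X=L\cup\bigcup_{s\in S}[s,\to)$.

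The set $L$ is a nonempty initial segment. It is closed: if $x\notin L$, then some $s\le x$, and then $(s',\to)$ for some $s'<s$ in $S$ is an open neighbourhood of $x$ missing $L$. By connectedness, $L$ is not open, so some $x\in L$ has every neighbourhood meeting $X\setminus L$. Since $X\setminus S$ is an open neighbourhood of $x$ (as $x\in L\subseteq X\setminus S$), the endpoint property at $x$ gives $q>x$ with $[x,q)\subseteq X\setminus S$. Then everything below $q$ avoids $S$, because $L$ lies below $x$ and $[x,q)$ misses $S$, so $[x,q)\subseteq L$. As $[x,q)$ is open in $X$, this makes $x$ interior to $L$, a contradiction.

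The delicate point is exactly where the non-first-countable or Sorgenfrey-like local behaviour of $X$ enters, namely that openness of $[x,q)$ in $X$ follows from $x$ being $\leq$-first in the tail. I expect this to need a careful argument using that $[x,q)=[x,\to)\cap(\leftarrow,q)$ and that $x$ has a neighbourhood contained in $[x,\to)$ by the endpoint property. If that step fails, I would fall back on verifying \ref{item:PM-Points-v18:1} via \Cref{theorem-hsp-ver-8:3}.
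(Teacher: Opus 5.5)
Your plan is sound and differs from the paper's in both directions. However, the last step of the converse, as written, rests on a false claim.

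\textbf{The flawed step.} The interval $[x,q)_{\leq}$ is in general \emph{not} open in $X$: take $X=[0,1]$, $x=\tfrac12$, $q=\tfrac34$. The justification you sketch cannot work either. The endpoint property of $x$ in $[x,\to)_{\leq}$ gives no neighbourhood of $x$ in $X$ contained in $[x,\to)_{\leq}$; again $x=\tfrac12\in[0,1]$ shows this.

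\textbf{The repair.} Nothing Sorgenfrey-like is involved, and no fallback to \ref{item:PM-Points-v18:1} is needed. You have already shown that no point below $q$ lies in $S$. Hence every $y<q$ is a strict lower bound of $S$, i.e.\ $(\leftarrow,q)_{\leq}\subseteq L$. This ray is open in $X$ because the order topology is coarser, and it contains $x$. So $x$ is interior to $L$, which is the contradiction you want. When you invoke \Cref{proposition-Sequences-v8:1} in $[x,\to)_{\leq}$, note that $x$ is not the last element, because $x<s$ for $s\in S$.

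\textbf{An unneeded claim.} Discard $S=\bigcup_{s\in S}(s,\to)_{\leq}$. The inclusion $\supseteq$ would make $S$ an upper set, you give no reason for it, and it is never used. Both $L\cap S=\emptyset$ and the closedness of $L$ hold without it.

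\textbf{Comparison with the paper.} With this repair the argument is complete.

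For the forward direction, the paper restricts $f$ to the closed connected tail $[q,\to)_{\leq}$. There it is still a continuous selection with $f([q,\to)_{\leq})=q$, so \Cref{proposition-Sequences-v6:1} applies. You instead go through condition \ref{item:PM-Points-v18:2} of the cited characterisation \Cref{theorem-hsp-ver-8:3} and translate it via \Cref{proposition-Sequences-v8:1}. This is valid, since the tail is convex and hence weakly orderable by the restricted order. But it leans on a deeper external result where the paper needs only its own short proposition.

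For the converse, the paper constructs the minimum directly. From \Cref{proposition-Sequences-v8:1} at $p$ it gets $S\subseteq[q,\to)_{\leq}$ for some $q\neq p$. It then shows $\overline{V}=[r,\to)_{\leq}$ for $V=\bigcup_{x\in S}(x,\to)_{\leq}$ and a single point $r$. The endpoint property at $r$ then forces $r\in S$. Your argument is the same mechanism run by contradiction. If $S$ had no minimum, your boundary point $x$ of $L$ would be exactly this $r$; indeed $x=\max L$ and $V=(x,\to)_{\leq}$. Your version treats $p$ uniformly and avoids identifying $\overline{V}\setminus V$ as a singleton. The paper's version is direct and exhibits the minimum.
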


\begin{proof}
  Let $f\in \sel[\mathcal{F}(X)]$ be such that $f(X)=p$. Then by
  \Cref{theorem-PM-Points-v17:1}, the relation $\leq_f$ is a
  compatible linear order on $X$ such that $f(S)$ is the first
  $\leq_f$-element of $S$ for every ${S\in \mathcal{F}(X)}$. Moreover,
  according to Eilenberg's result in \cite[Theorem II]{eilenberg:41},
  the linear orders $\leq$ and $\leq_f$ are identical. Therefore, by
  \Cref{proposition-Sequences-v6:1}, each $q\in X$ is an endpoint of
  the interval $[q,\to)_{\leq}$ because
  $f\left([q,\to)_{\leq}\right)=q$. Conversely, assume that $q\in X$
  is an endpoint of the interval $[q,\to)_{\leq}$ for every $q\in
  X$. The proof now consists of showing that every
  ${S\in \mathcal{F}(X)}$ has a first \mbox{$\leq$-element} $f(S)$
  because by \Cref{theorem-PM-Points-v17:1}, the map
  $f:\mathcal{F}(X)\to X$, defined in this way, is a continuous
  selection for $\mathcal{F}(X)$ with $p=f(X)\in \ocs(X)$. So, take an
  element $S\in \mathcal{F}(X)$ with $p\notin S$. Since $p$ is an
  endpoint of $X$ and $p\in X\setminus S$, it follows from
  \Cref{proposition-Sequences-v8:1} that $S\subseteq [q,\to)_{\leq}$
  for some $q\in X\setminus\{p\}$. Therefore, we also have
  $V=\bigcup_{x\in S}(x,\to)_{\leq}\subseteq [q,\to)_{\leq}$ and hence
  $\overline{V}\setminus V\neq \emptyset$ because $V$ is open but not
  closed in the connected space $X$. In fact,
  $\overline{V}\setminus V$ is a singleton because
  $(\leftarrow, y)_{\leq}\cap V=\emptyset$ for every
  $y\in \overline{V}\setminus V$. Thus, there exists a point
  $f(S)\in X$ such that
  $\overline{V}=\left[f(S),\to\right)_{\leq}$. However, by assumption,
  $f(S)$ is an endpoint of $\left[f(S),\to\right)_{\leq}$ and,
  therefore, $U\cap S\neq \emptyset$ for every open set $U\subseteq X$
  that contains the point $f(S)$. Accordingly, $f(S)\in S$ and clearly
  it is the first $\leq$-element of $S$. The proof is complete.
\end{proof}

All statements in \Cref{theorem-Sequences-v6:1} appear to be the best
possible. The assumption that each $q\in X$ is an endpoint of the
interval $[q,\to)_{\leq}$ cannot be dropped or even weakened to some
points of $X$. One counterexample is the topological sine curve $T$
defined as in \cref{eq:PM-Points-v14:1} and equipped with the
compatible linear order $\leq$ of the underlying interval
$[0,1]$. Namely, $\sel[\mathcal{F}(T)]\neq \emptyset$ and each
$q\in T\setminus\{(0,0)\}$ is an endpoint of the interval
$[q,\to)_{\leq}$, but $(0,0)\notin \ocs(T)$. Another counterexample is
the space $X$ defined as in \Cref{example-Sequences-v8:1} and equipped
with the compatible linear order $\leq$ on the underlying interval
$[-1,1]$. We have again that $\sel[\mathcal{F}(X)]\neq \emptyset$, but
now $(-1,0)\notin \ocs(X)$ and yet $q\in X$ is an endpoint of the
interval $[q,\to)_{\leq}$ for every $q\in
X\setminus\{(0,0)\}$. Finally, regarding the requirement that $p\in X$
is the first $\leq$-element of $X$, let us point out the following
natural consequence of \Cref{theorem-Sequences-v6:1}.

\begin{corollary}
  \label{corollary-PM-Points-v15:1}
  Let $X$ be a weakly orderable connected space and $\leq$ be a
  compatible linear order on it. If
  $p\in \ocs\left([p,\to)_{\leq}\right)$ for some $p\in X$, then
  $q\in \ocs\left([q,\to)_{\leq}\right)$ for every $q\in X$ with
  $p\leq q$.
\end{corollary}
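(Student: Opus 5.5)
The plan is to apply \Cref{theorem-Sequences-v6:1} twice, once in each direction, to the connected subspaces $Y_p=[p,\to)_{\leq}$ and $Y_q=[q,\to)_{\leq}$.

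First, fix $q\in X$ with $p\leq q$ and record the facts about these subspaces that we need. By \cite[Theorem 1.3]{MR0339099}, every interval of $X$ is connected. Moreover, the restriction of $\leq$ is a compatible linear order on every subspace $Y\subseteq X$ that is an interval. Indeed, the open interval topology of $(Y,\leq|_Y)$ is generated by the traces on $Y$ of the open rays of $X$. For an interval $Y$ this is exactly the topology of the interval order. The traces are open in the subspace topology because the rays are open in $X$. Hence $Y_p$ and $Y_q$ are weakly orderable connected spaces, and $p$, respectively $q$, is the first $\leq$-element of each.

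Next, apply the ``only if'' direction of \Cref{theorem-Sequences-v6:1} to $Y_p$. From $p\in\ocs(Y_p)$ we obtain that every $x\in Y_p$ is an endpoint of the interval $[x,\to)_{\leq}\cap Y_p$. For $x\geq p$ this interval is just $[x,\to)_{\leq}$.

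Then apply the ``if'' direction of \Cref{theorem-Sequences-v6:1} to $Y_q$. For each $x\in Y_q$ we have $x\geq q\geq p$, so $x\in Y_p$. The interval $[x,\to)_{\leq}$ computed in $Y_q$ coincides with the one computed in $Y_p$ and in $X$. It therefore carries the same subspace topology, so $x$ is an endpoint of it. Since $q$ is the first element of $Y_q$, the theorem gives $q\in\ocs(Y_q)$.

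The only point needing care is the claim that the subspace $Y_p$ with the restricted order satisfies the hypotheses of \Cref{theorem-Sequences-v6:1}. This amounts to weak orderability and connectedness of an interval, handled in the first step. Note also that the endpoint property of $x$ depends only on the subspace $[x,\to)_{\leq}$ itself, not on the ambient space in which it is viewed. I do not expect a genuine obstacle beyond these checks.
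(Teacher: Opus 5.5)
Your proposal is correct and follows the paper's intended route. The paper gives no separate proof, presenting the corollary as a natural consequence of \Cref{theorem-Sequences-v6:1}. Your argument supplies exactly that: the ``only if'' part of the theorem applied to the interval $[p,\to)_{\leq}$, followed by the ``if'' part applied to $[q,\to)_{\leq}$. Your supporting checks are the ones needed: intervals are connected, the restricted order stays compatible, and the endpoint property of $x$ in $[x,\to)_{\leq}$ does not depend on the ambient space.
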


It should be remarked that, unlike \Cref{theorem-Sequences-v6:1}, the
property in \Cref{corollary-PM-Points-v15:1} is valid for any weakly
orderable connected space $X$ without explicitly requiring that
${\sel[\mathcal{F}(X)]\neq \emptyset}$. For instance, it is valid for
the real line $\R$ and the usual linear order $\leq$ on it, but
$\sel[\mathcal{F}(\R)]=\emptyset$, as shown in \cite[Proposition
5.1]{engelking-heath-michael:68}. \medskip

We conclude this section with some open questions. The following
question was raised in \cite[Question 388]{gutev-nogura:06b} and
subsequently in \cite[Problem 2.18]{gutev-2013springer}.

\begin{question}[\cite{gutev-2013springer,gutev-nogura:06b}]
  \label{question-PM-Points-v19:1}
  Let $X$ be a space that has a continuous selection for
  $\mathcal{F}(X)$. Then is $X$ topologically weakly well-orderable?
\end{question}

In \cite[Theorem 2.7]{hrusak-martinez:09}, Michael Hru\v{s}\'ak and
Iv\'an Martínez-Ruiz constructed a separable, first countable and
locally compact space which admits a continuous weak selection but is
not weakly orderable. In contrast, the question of whether there
exists a space $X$ that has a continuous selection for
$\mathcal{F}(X)$ but is not weakly orderable is still open. Since
every topologically weakly well-orderable space is also weakly
orderable, \Cref{question-PM-Points-v19:1} is open even in the
following special case.

\begin{question}
  \label{question-PM-Points-v19:2}
  Let $X$ be a weakly orderable space that has a continuous selection
  for $\mathcal{F}(X)$. Then is $X$ topologically weakly
  well-orderable?
\end{question}

In \cite[Theorem 1.2]{douwen:90}, Eric van Douwen showed that every
countably compact space $X$ with
$\sel[\mathcal{F}_2(X)]\neq \emptyset$ must be sequentially compact,
hence for these spaces countable compactness is equivalent to
sequential compactness. By resolving a question regarding the role of
countable compactness in the theory of continuous weak selection
raised in \cite[Question 1]{MR3122363}, it was obtained in
\cite[Theorems 1.5]{Motooka2019} that every countably compact space
with a continuous weak selection is weakly orderable. This gives a
further justification to the following question posed in
\cite[Question 389]{gutev-nogura:06b} and \cite[Problem
3.15]{gutev-2013springer}.

\begin{question}
  \label{question-PM-Points-v19:3}
  Let $X$ be a sequentially compact space that has a continuous
  selection for $\mathcal{F}(X)$. Then is $X$ topologically weakly
  well-orderable?
\end{question}

Regarding the selection problem for compact-like spaces, several
authors (see \cite{artico-marconi-pelant-rotter-tkachenko:02,
  douwen:90, garcia-ferreira-sanchis:04, glicksber:59, miyazaki:01b,
  venkataraman-rajagopalan-soundararajan:72}) contributed to the
fundamental result that every pseudocompact space with a continuous
weak selection is suborderable.  In particular, for a Tychonoff space
with a continuous weak selection, countable compactness,
pseudocompactness and sequential compactness are equivalent
topological properties. The interested reader is referred to
\cite{gutev-2013springer} where these results were discussed in detail
and where in \cite[Problem 3.13]{gutev-2013springer} the following
natural question was posed.

\begin{question}
  \label{question-PM-Points-v19:4}
  Let $X$ be a pseudocompact compact space that has a continuous
  selection for $\mathcal{F}(X)$. Then is it true that $X$ is a
  topologically well-suborderable space?
\end{question}

To clarify another aspect of \Cref{question-PM-Points-v19:4}, let us
point out that according to \cite[Proposition
3.14]{gutev-2013springer}, a pseudocompact space $X$ is topologically
well-suborderable if and only if it is a topologically well-ordered
subset (in the sense of \cite{engelking-heath-michael:68}) of the
\v{C}ech-Stone compactification $\beta X$ of $X$.

\section{Selections and Totally Disconnected Spaces}
\label{sec:select-totally-disc}

A space $Y$ is \emph{totally disconnected} if each singleton of $Y$ is
an intersection of clopen subsets of $Y$. It was shown in
\cite[Theorem 1.5]{gutev-nogura:00b} that a space $X$ is totally
disconnected whenever the set $\ocs(X)$ is dense in $X$. In this
section, we will give a very simple proof of the following natural
generalisation of this result. 

\begin{theorem}
  \label{theorem-Sequences-v3:1}
  If\/ $X$ is a space, then\/ $\overline{\ocs(X)}$ is a totally
  disconnected subset of\/ $X$.
\end{theorem}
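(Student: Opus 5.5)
The plan is to produce, for given distinct points $x,y\in Y=\overline{\ocs(X)}$, a set that is clopen in $Y$, contains $x$ and misses $y$. The tool is the construction already used in the proof of \Cref{proposition-Sequences-v6:1}. Fix $g\in\sel[\mathcal{F}(X)]$ and a nonempty closed set $A\subseteq X$, and put $h(z)=g(A\cup\{z\})$. Then $h:X\to X$ is continuous and takes only the values $z$ or $g(A)$.

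For $z\in A$ we have $h(z)=g(A)\in A$. Hence $V=h^{-1}(X\setminus A)$ is an open subset of $X\setminus A$, and on $V$ we have $h(z)=z$. The fixed-point set $H=\{z:h(z)=z\}$ is closed and equals $V\cup\{g(A)\}$. Consequently $\overline V\setminus V\subseteq\{g(A)\}$, so $V$ is clopen in $X\setminus\{g(A)\}$. The complementary piece $B=(X\setminus A)\setminus V$ is therefore clopen in $X\setminus A$.

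In this way every continuous selection $g$ and every closed set $A$ cut $X\setminus\{g(A)\}$ into a clopen piece $V$ and its complement. The natural choice is the following. By the Hausdorff property, take a closed neighbourhood $A$ of $y$ with $x\notin A$. Then $g(A)\in A$, so $g(A)\neq x$.

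If $x\in V$, then $V\cap Y$ is open in $Y$, and its closure in $Y$ lies in $(V\cup\{g(A)\})\cap Y$. The one point to exclude is $g(A)$, which lies in $A$. So I will shrink: pick open sets $x\in O$ and $y\in O'$ with disjoint closures, and take $A=\overline{O'}$. Then $V\cap O$ is separated from $g(A)$, because $g(A)\in A$ and $\overline O\cap A=\emptyset$. Hence the candidate set is $V\cap \overline O\cap Y$, or more carefully $\overline{V\cap O}\cap Y\subseteq V\cup\{g(A)\}$. I would still need to adjust this into a genuinely clopen set of $Y$, and I expect this to be doable because the only possible boundary point, $g(A)$, sits in the closed set $A$, away from $x$.

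The main obstacle is guaranteeing $x\in V$, that is, $g(A\cup\{x\})=x$. For this I use that $x\in\overline{\ocs(X)}$. Pick $q\in\ocs(X)\cap O$ with $q=g(X)$. I claim $g(A\cup\{q\})=q$, so $q\in V$. This holds, for instance, if $g$ is $q$-maximal. In general I would argue via the openness of the intervals $(\leftarrow,q)_{\leq_g}$ from the proof of \Cref{proposition-PM-Points-v15:1} together with continuity of $g$ at $X$, possibly replacing $A$ by a smaller closed set.

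Knowing only that $q\in V$ rather than $x\in V$, I would run the argument symmetrically. Each point of $\ocs(X)$ near $x$ and each point of $\ocs(X)$ near $y$ yields such a partition. A closing compactness-free argument is then needed: the union over all such $q$ of the pieces $V_q\cap Y$ should be clopen in $Y$, contain $x$ and miss $y$. This last step, controlling the single exceptional points $g(A)$ uniformly, is where I expect the real work to lie.
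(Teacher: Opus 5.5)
There is a genuine gap. You never produce a set that is clopen in $Y=\overline{\ocs(X)}$ and separates $x$ from $y$, and the two steps you defer carry the whole theorem.

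The claim $q\in V$, i.e.\ $g(A\cup\{q\})=q$ whenever $g(X)=q$, is false for a general $g\in\sel[\mathcal{F}(X)]$. It worked in \Cref{proposition-Sequences-v6:1} only because connectedness makes $g$ monotone, so that $q$ is the $\leq_g$-first point. For a counterexample, let $q,a\in X$ be isolated. Then $\{q,a\}$ is a clopen point of $\mathcal{F}(X)$, so redefining $g$ there to take the value $a$ keeps $g$ continuous with $g(X)=q$. Yet $g(\{a\}\cup\{q\})=a$, so with $x=q$, $y=a$ and $A=\{a\}$ (which cannot be shrunk) you get $x\notin V$. To choose $g$ well you would need, for every $q\in\ocs(X)$ and closed $A\not\ni q$, a selection with $g(A\cup\{q\})=q$, and you do not establish this.

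Even granting it, each $V_q$ is clopen only in $X\setminus\{g_q(A)\}$. The exceptional point $g_q(A)\in A$ may lie in $Y$ (it may even be $y$) and be a limit point of $V_q\cap Y$. So neither $V_q\cap Y$ nor a union of infinitely many such sets need be closed in $Y$. Since $x$ itself need not belong to $\ocs(X)$, nothing puts $x$ into that union either. Also, $h$ need not take only the values $z$ and $g(A)$: for $z\notin A$ one only has $h(z)\in A\cup\{z\}$, although your computation of $V$ and $H$ uses nothing more.

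The missing idea is to stop separating points one at a time and use the component structure. Since $Y$ is closed, it inherits a continuous selection. By \Cref{theorem-components-selection-relation} its quasi-components are its components, so it suffices to show that every connected subset of $Y$ is a singleton. Two facts quoted in the proofs of \Cref{proposition-Sequences-v3:1,proposition-Sequences-v4:1} do this: each $p\in\ocs(X)$ is a noncut point of $\mathcal{C}[p]$, and $\cut(\mathcal{C}[p])$ is open in $X$. Hence $\cut(\mathcal{C}[p])$ also misses $\overline{\ocs(X)}$. So $Y\cap\mathcal{C}[p]\subseteq\noncut(\mathcal{C}[p])$ has at most two points, and a connected Hausdorff set with at most two points is degenerate.

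The paper packages exactly this through Purisch sets. It shows $\ocs(X)\subseteq Z$ for a closed, totally disconnected Purisch set $Z$ (\Cref{proposition-Sequences-v3:1,proposition-Sequences-v4:1}). Hence $\overline{\ocs(X)}\subseteq Z$, and total disconnectedness passes to the subset.
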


The proof of \Cref{theorem-Sequences-v3:1} is based on several known
results and the interpretation that a space $Y$ is totally
disconnected precisely when each quasi-component of $Y$ is a
singleton. To this end, for a partition $\mathcal{P}$ of a set $X$ and
a point $p\in X$, let $\mathcal{P}[p]\in \mathcal{P}$ be the unique
element of $\mathcal{P}$ with ${p\in \mathcal{P}[p]}$.  The
\emph{components} (sometimes called \emph{connected components}) of a
space $X$ are the maximal connected subsets of $X$. They form a closed
partition $\mathcal{C}$ of $X$, and each element
$\mathcal{C}[p]\in \mathcal{C}$ corresponding to a point $p\in X$ is
called the \emph{component} of this point. The \emph{quasi-component}
$\mathcal{Q}[p]$ of a point $p\in X$ is the intersection of all clopen
subsets of $X$ containing this point. The quasi-components also form a
closed partition $\mathcal{Q}$ of $X$, thus they are simply called the
\emph{quasi-components} of $X$.  It is obvious that
$\mathcal{C}[p]\subseteq \mathcal{Q}[p]$, $p\in X$, but the converse
is not necessarily true. However, as shown in \cite[Theorem
4.1]{gutev-nogura:00b} and refined in \cite[Theorem
6.1]{gutev-nogura:09a}, the closed partitions $\mathcal{C}$ and
$\mathcal{Q}$ coincide for any space $X$ that has a continuous weak
selection.

\begin{theorem}[\cite{gutev-nogura:00b,gutev-nogura:09a}]
  \label{theorem-components-selection-relation}
  If\/ $X$ is a space with $\sel[\mathcal{F}_2(X)]\neq \emptyset$,
  then $\mathcal{C}[p]=\mathcal{Q}[p]$ for every point $p\in X$.
\end{theorem}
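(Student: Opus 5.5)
Since $\mathcal{C}[p]\subseteq\mathcal{Q}[p]$ always holds, the plan is to prove that $\mathcal{Q}[p]$ is connected for a fixed continuous weak selection $\sigma$ and a fixed $p\in X$. I would argue by contradiction. Suppose $\mathcal{Q}[p]=A\cup B$, where $A,B$ are disjoint, nonempty and closed in $\mathcal{Q}[p]$, and say $p\in A$. Because $\mathcal{Q}[p]$ is closed in $X$, both $A$ and $B$ are closed in $X$. The goal is then to build a clopen set $W\subseteq X$ with $A\subseteq W$ and $W\cap B=\emptyset$. This contradicts the definition of $\mathcal{Q}[p]$ as the intersection of all clopen sets containing $p$.

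The tool is the order-like relation $\leq_\sigma$. As recalled in the proof of \Cref{proposition-PM-Points-v15:1}, Michael's argument in \cite[7.2.1]{michael:51} does not use connectedness. It shows that for a continuous weak selection the $\leq_\sigma$-intervals $(\leftarrow,x)_{\leq_\sigma}$ and $(x,\to)_{\leq_\sigma}$ are open for every $x\in X$. Hence sets of the form $\{z: z<_\sigma x\}$ and finite unions or intersections of them are open, and $\{z: z\leq_\sigma x\}$ is closed. I would first reduce to a configuration in which $A$ lies $\sigma$-below $B$, at least locally. Using continuity of $\sigma$ on the closed set $A\times B$, via the map $(a,b)\mapsto\sigma(\{a,b\})$, the sets $\{(a,b): a<_\sigma b\}$ and $\{(a,b): b<_\sigma a\}$ partition $A\times B$ into two relatively open pieces, hence two clopen pieces. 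This lets me split $A$ and $B$ further into closed pieces on which the comparison is uniform. Because $p$ and the pieces all sit inside a single quasi-component, I would try to show that a single such piece suffices, namely that we may assume $a<_\sigma b$ for all $a\in A$ and $b\in B$.

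In that case the candidate separator is
\[
W=\bigcup_{a\in A}\,\bigcap_{b\in B}\,(\leftarrow,b)_{\leq_\sigma}\cup\{\ldots\},
\]
or, more concretely, a set of the form $\{z\in X: z<_\sigma b \text{ for all } b\in B\}$, suitably trimmed. I would build it by a maximality or Zorn argument: take a maximal open set $O\supseteq A$ with $\overline{O}\cap B=\emptyset$ that is a union of $\sigma$-initial intervals. Then I would show $\partial O=\emptyset$. If a point $y\in\partial O$ existed, comparing $y$ with points of $A$ and $B$ through $\sigma$, together with openness of the intervals, would either let $O$ be enlarged or put $y$ in both closures in a way that forces $\sigma(\{y,b\})$ to be discontinuous.

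The main obstacle is the failure of transitivity of $\leq_\sigma$ off connected sets. Without transitivity the sets built from intervals need not be closed, and $\partial O=\emptyset$ is not automatic. I would handle this by working inside $\mathcal{Q}[p]$ plus a clopen neighbourhood, and by iterating the separation argument through the refinement in \cite[Theorem 6.1]{gutev-nogura:09a}. Concretely, I would show that every pair $x,y$ in distinct components is separated by a $\sigma$-defined clopen set. After that, a compactness-free finite-intersection argument for closed $A,B$ finishes the proof.
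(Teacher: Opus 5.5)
\textbf{There is a genuine gap: the proposal does not yet contain a proof.} The paper gives no proof of this statement. It quotes it from \cite[Theorem 4.1]{gutev-nogura:00b} and its refinement \cite[Theorem 6.1]{gutev-nogura:09a}. So your plan to finish ``by iterating the separation argument through the refinement in \cite[Theorem 6.1]{gutev-nogura:09a}'' is circular. More seriously, every step that carries weight is asserted rather than argued:
\begin{enumerate}
\item The reduction to $a<_\sigma b$ for all $a\in A$, $b\in B$ is unsupported. Without compactness, a clopen subset of $A\times B$ need not be a finite union of rectangles. Also, the pieces you would split $A$ and $B$ into are clopen only in $\mathcal{Q}[p]$, not in $X$. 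So the fact that they lie in one quasi-component of $X$ tells you nothing about them.
\item The separator $W$ is never defined (``$\cup\{\ldots\}$'', ``suitably trimmed''). The set $\{z: z<_\sigma b \text{ for all } b\in B\}$ is an intersection of infinitely many open sets, and you give no reason for it to be clopen.
\item The Zorn step fails: the union of a chain of open sets $O\supseteq A$ with $\overline{O}\cap B=\emptyset$ can have closure meeting $B$.
\item The claim $\partial O=\emptyset$, which is the whole content of the theorem, is only gestured at.
\item ``Every pair $x,y$ in distinct components is separated by a clopen set'' is just the theorem restated. Once you have it, $\mathcal{Q}[p]\subseteq\mathcal{C}[p]$ is immediate and no finite-intersection argument is needed.
\end{enumerate}

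\textbf{The missing idea.} Work pointwise with a $\sigma$-interval and continuous self-maps; then the failure of transitivity never matters.
\begin{enumerate}
\item Let $y\in\mathcal{Q}[p]\setminus\{p\}$, say $p<_\sigma y$ (the other case is symmetric). Set $Z=\{z\in X: p\leq_\sigma z\leq_\sigma y\}$.
\item Define $k(x)=p$ if $x\leq_\sigma p$ and $k(x)=x$ otherwise. Then $k$ is continuous: it is constant on the open set $(\leftarrow,p)_{\leq_\sigma}$, the identity on the open set $(p,\to)_{\leq_\sigma}$, and continuous at $p$ because $k(x)\in\{x,p\}$.
\item Then $r(x)=\sigma(\{k(x),y\})$ is continuous, satisfies $r(X)\subseteq Z$, and fixes $Z$ pointwise. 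So it is a retraction of $X$ onto $Z$.
\item Consequently $p$ and $y$ lie in one quasi-component of $Z$, because preimages under $r$ of clopen subsets of $Z$ are clopen in $X$.
\item For $z\in Z$, the map $g_z(x)=\sigma(\{x,z\})$ sends $Z$ continuously into $Z$, with $g_z(p)=p$ and $g_z(y)=z$.
\item Continuous maps carry quasi-components into quasi-components. Hence every $z\in Z$ lies in the quasi-component of $p$ in $Z$.
\item So $Z$ has no proper nonempty clopen subset, i.e.\ $Z$ is connected, and therefore $y\in\mathcal{C}[p]$.
\end{enumerate}
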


The following special sets, defined in \cite{gutev:07a}, are based on
a construction given by Purisch in \cite{purisch:77}. 

\begin{definition}
  [\cite{gutev:07a,purisch:77}]
  \label{definition-purisch-set}
  Let $X$ be a space such that
  $\left|\noncut(\mathcal{C}[p])\right|\leq 2$ for every $p\in X$. We
  shall say that a subset $Z\subseteq X$ is a \emph{Purisch set} if
  $\noncut(X)\subseteq Z$ and
  $\left|Z\cap \cut(\mathcal{C}[p])\right|=1$ for every point $p\in X$
  such that $\cut(\mathcal{C}[p])\neq \emptyset$ and
  $\left|\noncut(\mathcal{C}[p])\right|\leq 1$.
\end{definition}

Let us point out the following basic properties of Purisch subsets.

\begin{proposition}
  \label{proposition-Sequences-v3:1}
  Let $X$ be a space with $\sel[\mathcal{F}_2(X)]\neq \emptyset$. Then
  $\left|\noncut(\mathcal{C}[p])\right|\leq 2$ for every $p\in X$ and,
  in particular, $X$ has at least one Purisch subset. Moreover, any
  Purisch subset of $X$ is totally disconnected and closed.
\end{proposition}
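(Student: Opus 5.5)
The plan is to reduce everything to the structure of a single component, where the restriction of a continuous weak selection turns it into a weakly orderable connected space. After that, total disconnectedness comes from a finiteness count combined with \Cref{theorem-components-selection-relation}, and closedness is a separate order-theoretic argument, which I expect to be the hard part.

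\emph{Step 1 (noncut points and existence).} Fix $\sigma\in\sel[\mathcal{F}_2(X)]$ and $p\in X$. The restriction of $\sigma$ to $\mathcal{F}_2(\mathcal{C}[p])$ is a continuous weak selection for the connected space $\mathcal{C}[p]$. By Michael's \cite[Lemma 7.2]{michael:51}, $\mathcal{C}[p]$ is then weakly orderable with respect to $\leq_\sigma$. As recalled in \Cref{sec:monotone-selections} (via \cite[Theorem 4.1]{MR0339099}), any connected weakly orderable space has at most two noncut points, so $\left|\noncut(\mathcal{C}[p])\right|\leq 2$. A Purisch set is then obtained by the axiom of choice. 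We put all noncut points into $Z$. Then, for every component with $\cut(\mathcal{C}[p])\neq\emptyset$ and at most one noncut point, we select exactly one cut point of that component.

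\emph{Step 2 (total disconnectedness).} Let $Z$ be a Purisch set. By construction, each component $C$ of $X$ meets $Z$ in at most two points. Indeed, $Z\cap C$ consists of the noncut points of $C$ (at most two, by Step 1), together with at most one cut point, and that extra cut point is added only when $C$ has at most one noncut point.

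Now the restriction of $\sigma$ to $\mathcal{F}_2(Z)$ is a continuous weak selection for $Z$. Hence, by \Cref{theorem-components-selection-relation}, the quasi-components of $Z$ coincide with its components. A connected subset of $Z$ lies in a single component of $X$, so it has at most two points. Since $Z$ is Hausdorff, such a set is a singleton. Therefore every quasi-component of $Z$ is a singleton, which means $Z$ is totally disconnected.

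\emph{Step 3 (closedness, the main obstacle).} Suppose $x\in\overline{Z}\setminus Z$ and put $C=\mathcal{C}[x]$. Then $x$ is a cut point of $C$, so there are $a,b\in C$ with $a<_\sigma x<_\sigma b$. As in Michael's \cite[7.2.1]{michael:51}, continuity of $\sigma$ makes the sets $(a,\to)_{\leq_\sigma}$ and $(\leftarrow,b)_{\leq_\sigma}$ open in $X$. Hence $W=(a,b)_{\leq_\sigma}$ is an open neighbourhood of $x$.

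Next, take any component $D\neq C$ meeting $W$. Then $D$ avoids $a$, $b$ and $x$, and each of these points splits $X\setminus\{\text{point}\}$ into two disjoint open sets. Since $D$ is connected, it lies entirely inside $(a,x)_{\leq_\sigma}$ or entirely inside $(x,b)_{\leq_\sigma}$. So the points of $Z$ accumulating at $x$ come from whole components that crowd into one side of $x$, inside arbitrarily small intervals $(x,b')$ with $b'\in C$.

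The step I would try first is to combine this with the fact that $C=\mathcal{Q}[x]$ (from \Cref{theorem-components-selection-relation}) and with continuity of $\sigma$ at pairs $\{x,z\}$ for $z\in D$. The aim is to show that such a crowding of foreign components forces $x$ to be a noncut point of $C$, or to be the chosen cut point, either of which contradicts $x\notin Z$.

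I am genuinely unsure this step goes through as stated. A tentative configuration looks like a counterexample: an interval $C=[-1,1]$ together with isolated points $p_n\to 0$, ordered so that each $p_n$ lies between $0$ and the positive reals. In that configuration each $p_n$, as a singleton component, would be a noncut point in $Z$ accumulating at the cut point $0$. So before attempting Step 3 I would double-check the intended meaning of $\noncut(X)$ for nonconnected $X$, and whether such configurations actually carry a continuous weak selection.
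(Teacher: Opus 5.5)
Steps 1 and 2 are fine and follow the paper's route. The bound $|\noncut(\mathcal{C}[p])|\le 2$ comes from weak orderability of each component: the paper cites \cite[Corollary 2.7]{gutev:07a}, while you apply Michael's lemma directly. Total disconnectedness then follows, exactly as you argue, from \Cref{theorem-components-selection-relation} applied to $Z$ together with $|Z\cap\mathcal{C}[p]|\le 2$. Your ``by construction'' assumes that a Purisch set contains no cut points beyond the required ones; that is the intended reading of \Cref{definition-purisch-set}.

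The genuine gap is Step 3: you do not prove closedness, and you even suspect the statement is false. The missing ingredient, which the paper also takes from \cite[Corollary 2.7]{gutev:07a}, is that $\cut(\mathcal{C}[p])$ is open in $X$ for every $p\in X$. Closedness is then immediate. If $x\in\overline{Z}\setminus Z$, then $x\in\cut(C)$ with $C=\mathcal{C}[x]$. The open set $\cut(C)$ meets $Z$ in at most one point, and that point is not $x$. Removing it leaves an open neighbourhood of $x$ disjoint from $Z$, which is a contradiction.

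You were one observation away from this openness. Keep your $a<_\sigma x<_\sigma b$ in $C$ and the open set $W=(a,\to)_{\leq_\sigma}\cap(\leftarrow,b)_{\leq_\sigma}$, and take any $d\in X\setminus C$. The sets $\{c\in C: c<_\sigma d\}$ and $\{c\in C: d<_\sigma c\}$ are open, by the same continuity argument that makes $W$ open. They partition the connected set $C$, so one of them is empty: $d$ lies below all of $C$ or above all of $C$. Hence $d<_\sigma a$ or $b<_\sigma d$, and in either case $d\notin W$. Thus $W\subseteq C$, and in fact $W\subseteq\cut(C)$, because points strictly between $a$ and $b$ are not $\leq_\sigma$-extremes of $C$.

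You asked on which side of $x$ a foreign component lies. The relevant fact is that every point outside $C$ lies on one side of the whole of $C$, and hence beyond $a$ or $b$. So nothing outside $C$ can accumulate at a cut point of $C$.

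This also rules out your tentative counterexample. An isolated $p_n\notin C=[-1,1]$ cannot sit between $0$ and the positive reals. Suppose infinitely many $p_n$ lie below all of $C$ (the other case is symmetric). Continuity of $\sigma$ at $\{-\tfrac12,0\}$ gives a neighbourhood $V$ of $0$ with $-\tfrac12<_\sigma v$ for all $v\in V$. This contradicts $p_n<_\sigma-\tfrac12$ for large $n$. So that space carries no continuous weak selection, and the statement is not in doubt.
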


\begin{proof}
  Since $\sel[\mathcal{F}_2(X)]\neq \emptyset$, it follows from
  \cite[Corollary 2.7]{gutev:07a} that
  $\left|\noncut(\mathcal{C}[p])\right|\leq 2$ and
  $\cut(\mathcal{C}[p])$ is open in $X$, for every $p\in X$. The fact
  that any Purisch subset of $X$ is closed was obtained in
  \cite[Proposition 3.3]{gutev:07a}, it is a consequence of
  \Cref{definition-purisch-set} and the property that the cut points
  of each component of $X$ form an open subset of $X$. The fact that
  any Purisch subset of $X$ is totally disconnected was obtained in
  \cite[Proposition 3.2]{gutev:07a}, it is a consequence of
  \Cref{theorem-components-selection-relation} and
  \Cref{definition-purisch-set}.
\end{proof}

Based on these properties, the proof of \Cref{theorem-Sequences-v3:1}
ends with the following simple observation.

\begin{proposition}
  \label{proposition-Sequences-v4:1}
  If\/ $X$ is a space with $\sel[\mathcal{F}(X)]\neq \emptyset$,
  then\/ $\ocs(X)\subseteq Z$ for any Purisch subset $Z\subseteq X$.
\end{proposition}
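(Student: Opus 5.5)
The plan is to fix $f\in\sel[\mathcal{F}(X)]$, put $p=f(X)$ and $C=\mathcal{C}[p]$, and show that $p\in\noncut(C)$. I read the condition $\noncut(X)\subseteq Z$ in \Cref{definition-purisch-set} as saying that $Z$ contains the noncut points of every component of $X$. Under that reading, $p\in\noncut(C)$ gives $p\in Z$ at once, whichever Purisch subset $Z$ is taken. If $C=\{p\}$ there is nothing to prove. So the work is in showing that $p$ cannot be a cut point of the connected set $C$.

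\emph{Reduction to $C$.} The restriction $g=f\restriction\mathcal{F}(C)$ is a continuous selection for $\mathcal{F}(C)$, because $C$ is closed in $X$ and $\mathcal{F}(C)$ is a subspace of $\mathcal{F}(X)$. Since $C$ is connected, \Cref{theorem-PM-Points-v17:1}\ref{item:PM-Points-v17:2} shows that $\leq_g$ is a compatible linear order on $C$ and that $g(S)$ is the first $\leq_g$-element of $S$. Hence $g(C)$ is the first element of $C$, and it is a noncut point of $C$: the set $C\setminus\{g(C)\}$ is an interval, and intervals are connected by \cite[Theorem 1.3]{MR0339099}. It therefore suffices to prove that $f(C)=f(X)=p$.

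\emph{Showing $f(C)=p$, the expected main obstacle.} Suppose instead that $q=f(C)\neq p$. Choose disjoint open sets $O_p\ni p$ and $O_q\ni q$. By continuity of $f$ at $X$ there is a basic neighbourhood $\langle\mathcal{V}\rangle$ of $X$ with $f(\langle\mathcal{V}\rangle)\subseteq O_p$. By continuity at $C$ there is $\langle\mathcal{W}\rangle\ni C$ with $f(\langle\mathcal{W}\rangle)\subseteq O_q$. The idea is to connect the two hyperspace elements $C$ and $X$ through a connected family of closed sets on which $f$ is forced to be constant. I would proceed as follows.
\begin{enumerate}
\item Use \Cref{theorem-components-selection-relation}, which gives $C=\mathcal{Q}[p]$, so $C$ is the intersection of the clopen sets containing $p$.
\item From this, produce a clopen set $W\supseteq C$ with $W\subseteq\bigcup\mathcal{W}$. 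The hoped-for mechanism is the standard compactness-type argument for quasi-components, available here because selections force enough local compactness.
\item Look at sets of the form $C\cup F$ with $F\subseteq X\setminus W$ closed, and at $W\cup F$. The map $x\mapsto f(W\cup\{x\})$ behaves like the map $h$ in the proof of \Cref{proposition-Sequences-v6:1}: it is continuous and takes its values in the clopen set $W$ on the relevant sets.
\item Compare $f(W)$, $f(C)$ and $f(X)=p\in W$, and obtain a contradiction from the separation of $p$ and $q$ by clopen-type sets inside $W$ together with the connectedness of $C$.
\end{enumerate}
If the direct clopen argument fails, the fallback is to argue by contradiction on the cut structure. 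Assume $C\setminus\{p\}=A\cup B$ is a separation, and apply the map $x\mapsto f(\{a,b,x\})$ with $a\in A$ and $b\in B$ on the connected sets $A\cup\{p\}$ and $B\cup\{p\}$. The aim is to show that $p$ would have to lie $\leq_g$-between points of $A$ and points of $B$, contradicting $f(X)=p$ being below everything in the sense of $\leq_f$ restricted to $\{a,b,p\}$.

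Once $f(C)=p$ is established, $p$ is the first $\leq_g$-element of $C$. Therefore $p\in\noncut(C)\subseteq Z$, which gives $\ocs(X)\subseteq Z$.
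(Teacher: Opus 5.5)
\textbf{There is a genuine gap: the reduction everything rests on is false.} You propose to show $f(C)=f(X)$ for the given $f$. For an arbitrary $f\in\sel[\mathcal{F}(X)]$ this can fail, so steps (a)--(d) are looking for a contradiction that does not exist.

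Take $X=[0,1]\cup[2,3]$. The sets $\langle[0,1]\rangle$, $\langle[2,3]\rangle$ and $\langle[0,1],[2,3]\rangle$ are open, pairwise disjoint and cover $\mathcal{F}(X)$, so each is clopen. Put $f(S)=\min S$ on the first two and $f(S)=\max(S\cap[0,1])$ on the third. The map $S\mapsto S\cap[0,1]$ is continuous on the third piece, so $f$ is a continuous selection. Here $p=f(X)=1$ and $C=[0,1]$, but $f(C)=0\neq p$. The conclusion $p\in\noncut(C)$ still holds; it is just not witnessed by $f$ restricted to $\mathcal{F}(C)$. Nothing forces a non-monotone $f$ to respect the inclusion $C\subseteq X$.

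Your fallback has the same defect. To contradict $a<p<b$ you need $f(\{p,a\})=p=f(\{p,b\})$, i.e.\ $p$-maximality of $f$ on pairs, and $f(X)=p$ does not give this: in the example, $f(\{0,1\})=0$. Step (b) is also unsupported. $C=\mathcal{Q}[p]$ only makes $C$ an intersection of clopen sets; without some compactness it does not give a clopen set inside a prescribed neighbourhood of $C$.

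\textbf{What is missing} is an actual proof that $p=f(X)$ is a noncut point of $\mathcal{C}[p]$. The paper does not prove this by hand. It imports it from \cite[Proposition 5.1]{Gutev2024a}, after which $p\in Z$ is immediate from \Cref{definition-purisch-set}.

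If you want to keep your ending via \Cref{theorem-PM-Points-v17:1}, apply it to a selection that genuinely takes the value $p$ at $C$, not to the given $f$. One way to get this is to split into cases.
\begin{enumerate}
\item If $p$ is isolated, then $C=\{p\}$ and there is nothing to prove.
\item If $p$ is not countably approachable, \cite[Theorem 1.5]{Gutev2024a} (as used in the proof of \Cref{theorem-PM-Points-v21:1}) gives a $p$-maximal $f'\in\sel[\mathcal{F}(X)]$. Then $f'(C)=p$, and your argument goes through for $f'$.
\item If $p$ is $\omega$-approachable, the clopen sets $S_n\not\ni p$ converging to $p$ miss $C$, so $p\in\overline{X\setminus C}$. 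But $\cut(C)$ is open in $X$ (see the proof of \Cref{proposition-Sequences-v3:1}), hence $p\notin\cut(C)$.
\end{enumerate}
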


\begin{proof}
  Take a point $p\in \ocs(X)$ and a Purisch subset $Z\subseteq X$, see
  \Cref{proposition-Sequences-v3:1}. Then by \cite[Proposition
  5.1]{Gutev2024a}, $p\in \noncut(\mathcal{C}[p])$. Hence, by
  \Cref{definition-purisch-set}, $p\in Z$.
\end{proof}

There are very simple examples of connected spaces $X$ such that
$\sel[\mathcal{F}(X)]\neq \emptyset$ and yet $\ocs(X)$ is not equal to
any Purisch subset of $X$. In fact, according to
\Cref{proposition-Sequences-v6:1}, any connected space $X$ with
$\sel[\mathcal{F}(X)]\neq \emptyset$ and $\pend(X)\neq \noncut(X)$ is
an example in this regard. Similarly, there are simple examples in the
case of totally disconnected spaces. They are based on the following
characteristic property of total disconnectedness in the realm of
spaces that have continuous weak selections.

\begin{corollary}
  \label{corollary-Sequences-v4:1}
  Let $X$ be a space with $\sel[\mathcal{F}_2(X)]\neq\emptyset$. Then
  $X$ is totally disconnected if and only if\/ $X$ is a Purisch subset
  of itself.
\end{corollary}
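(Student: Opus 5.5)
The plan is to prove the two implications separately, using \Cref{theorem-components-selection-relation}, \Cref{definition-purisch-set} and \Cref{proposition-Sequences-v3:1}. Throughout, $X$ has a continuous weak selection. By \Cref{proposition-Sequences-v3:1} we then have $\left|\noncut(\mathcal{C}[p])\right|\leq 2$ for every $p\in X$, so the notion of a Purisch subset is meaningful.

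For the direction ``$X$ is Purisch $\Rightarrow$ $X$ totally disconnected'', I will simply invoke \Cref{proposition-Sequences-v3:1}: every Purisch subset of $X$ is totally disconnected. In particular, if $X$ is a Purisch subset of itself, then $X$ is totally disconnected.

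For the converse, assume $X$ is totally disconnected. Then every quasi-component of $X$ is a singleton, so every component is a singleton as well, since $\mathcal{C}[p]\subseteq\mathcal{Q}[p]$. For a one-point space $\{p\}$ the complement of $p$ is empty, which is connected, so $p$ is a noncut point of its component. Hence $\cut(\mathcal{C}[p])=\emptyset$ and $\noncut(\mathcal{C}[p])=\{p\}$ for every $p\in X$.

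I then check the two clauses of \Cref{definition-purisch-set} for $Z=X$.
\begin{enumerate}
\item The requirement $\noncut(X)\subseteq X$ holds trivially. Here $\noncut(X)$ should be read as the union of the sets $\noncut(\mathcal{C}[p])$, which is all of $X$.
\item The cardinality requirement $\left|Z\cap \cut(\mathcal{C}[p])\right|=1$ is only imposed on points $p$ with $\cut(\mathcal{C}[p])\neq\emptyset$. By the previous paragraph there are no such points, so this clause holds vacuously.
\end{enumerate}
Therefore $X$ is a Purisch subset of itself.

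The only delicate point is the convention for $\noncut$ of a degenerate one-point component, and for $\noncut(X)$ when $X$ is disconnected. I will read these as the union of the noncut points of the components, which is how the definition is intended. The weak selection hypothesis is needed only to make the definition applicable. \Cref{theorem-components-selection-relation} could replace the direct argument $\mathcal{C}[p]\subseteq\mathcal{Q}[p]$, but that argument already suffices.
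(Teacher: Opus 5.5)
Your proof is correct and follows the paper's. The forward direction is the same appeal to \Cref{proposition-Sequences-v3:1}. For the converse, the paper likewise reduces to singleton components and reads off \Cref{definition-purisch-set}, phrased as ``any Purisch subset $Z$ equals $X$''. The only difference is that it cites \Cref{theorem-components-selection-relation} where, as you note, $\mathcal{C}[p]\subseteq\mathcal{Q}[p]$ already suffices.

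One correction to your closing remark. The weak selection hypothesis is not needed at all for the converse, since singleton components make the definition applicable automatically. It does real work in the forward direction, because the total disconnectedness of Purisch subsets in \Cref{proposition-Sequences-v3:1} rests on \Cref{theorem-components-selection-relation}.
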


\begin{proof}
  By \Cref{proposition-Sequences-v3:1}, $X$ has a Purisch subset and
  $X$ is totally disconnected if it is a Purisch subset of
  itself. Conversely, if $X$ is totally disconnected, then by
  \Cref{theorem-components-selection-relation}, the connected
  components of $X$ are singletons. Hence, by
  \Cref{definition-purisch-set}, $X=Z$.
\end{proof}

To discuss the proper place of \Cref{theorem-Sequences-v3:1}, let us
first mention the following simple observation, which is complementary
to \cite[Theorem 1.5]{gutev-nogura:00b}.

\begin{proposition}
  \label{proposition-PM-Points-v21:1}
  The set\/ $\ocs(X)$ is dense in a space $X$ if and only if\/
  $\ocs(Y)\subseteq\ocs(X)$ for every closed subset $Y\subseteq X$
  with $\ocs(X)\subseteq Y$.
\end{proposition}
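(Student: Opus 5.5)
The forward implication is immediate from the definitions. For the converse, the plan is to argue by contraposition: if $\ocs(X)$ is not dense, enlarge its closure by one point and build a selection that sends the enlarged set to that point. Throughout I assume $\sel[\mathcal{F}(X)]\neq\emptyset$, as the paper tacitly does when discussing $\ocs(X)$. Without it, $\ocs(X)=\emptyset$ and the right-hand condition holds vacuously for $Y=\emptyset$, so the equivalence would fail.

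\emph{Forward implication.} Suppose $\ocs(X)$ is dense in $X$ and $Y\subseteq X$ is closed with $\ocs(X)\subseteq Y$. Then $X=\overline{\ocs(X)}\subseteq Y$, so $Y=X$ and trivially $\ocs(Y)=\ocs(X)$.

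\emph{Converse, setup.} Suppose $\ocs(Y)\subseteq\ocs(X)$ for every closed $Y\supseteq\ocs(X)$, and assume towards a contradiction that $\overline{\ocs(X)}\neq X$. Pick a point $x\in X\setminus\overline{\ocs(X)}$ and set $Y=\overline{\ocs(X)}\cup\{x\}$. Then $Y$ is closed in $X$, since points are closed in the Hausdorff space $X$, and $\ocs(X)\subseteq Y$. Moreover $x$ is an isolated point of $Y$, because $X\setminus\overline{\ocs(X)}$ is an open set meeting $Y$ only in $x$.

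\emph{Converse, the selection.} Fix $f\in\sel[\mathcal{F}(X)]$. Since $Y$ is closed, $\mathcal{F}(Y)\subseteq\mathcal{F}(X)$, and the Vietoris topology of $\mathcal{F}(Y)$ is the relative one. Hence $f\restriction\mathcal{F}(Y)$ is a continuous selection for $\mathcal{F}(Y)$. Define $g:\mathcal{F}(Y)\to Y$ by
\[
g(S)=x \ \text{ if } x\in S, \qquad g(S)=f(S) \ \text{ if } x\notin S .
\]
Clearly $g$ is a selection for $\mathcal{F}(Y)$.

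\emph{Converse, continuity and conclusion.} The only point to verify is that $g$ is continuous, and this is where isolatedness of $x$ is used. The family $\{S\in\mathcal{F}(Y): x\in S\}$ equals $\langle\{\{x\},Y\}\rangle$, which is Vietoris open because $\{x\}$ is open in $Y$. Its complement equals $\langle\{Y\setminus\{x\}\}\rangle$, which is Vietoris open because $Y\setminus\{x\}$ is open in $Y$. Thus $\mathcal{F}(Y)$ splits into two clopen pieces. On the first, $g$ is constant; on the second, $g$ agrees with the continuous map $f$. Hence $g\in\sel[\mathcal{F}(Y)]$, and $g(Y)=x\in\ocs(Y)$. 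But $x\notin\overline{\ocs(X)}$, so $x\notin\ocs(X)$, contradicting $\ocs(Y)\subseteq\ocs(X)$. Therefore $\ocs(X)$ is dense in $X$.
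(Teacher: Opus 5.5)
Your proof is correct and follows the paper's argument: the same $Y=\overline{\ocs(X)}\cup\{x\}$ with $x$ isolated in $Y$. You also supply the step the paper leaves implicit, namely why $x\in\ocs(Y)$: restrict $f$ to $\mathcal{F}(Y)$ and override it on the clopen set $\langle\{\{x\},Y\}\rangle$.

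One correction to your preamble. The hypothesis $\sel[\mathcal{F}(X)]\neq\emptyset$ is harmless but not needed, and the equivalence does not fail without it. When $\ocs(X)=\emptyset$, the right-hand condition ranges over all closed $Y$, not just $Y=\emptyset$. Your own choice then gives $Y=\overline{\ocs(X)}\cup\{x\}=\{x\}$, and $\ocs(\{x\})=\{x\}\not\subseteq\emptyset$ violates the condition. No $f$ is needed to define $g$ in that case.
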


\begin{proof}
  If $\ocs(X)$ is dense in $X$ and $Y\subseteq X$ is a closed set with
  $\ocs(X)\subseteq Y$, then $Y=X$ and trivially
  $\ocs(Y)=\ocs(X)$. Conversely, let $\ocs(Y)\subseteq\ocs(X)$ for
  every closed subset $Y\subseteq X$ with $\ocs(X)\subseteq Y$. To
  show that $\ocs(X)$ is dense in $X$, let us assume that this is not
  true, and let us take a point $p\in X\setminus
  \overline{\ocs(X)}$. Since $p$ is an isolated point of the closed
  set $Y=\overline{\ocs(X)}\cup\{p\}$, this implies
  $p\in \ocs(Y)\subseteq \ocs(X)$. Obviously, this is impossible.
\end{proof}

Another way to state \Cref{proposition-PM-Points-v21:1} is that the
set $\ocs(X)$ is dense in a space $X$ if and only if $\ocs(Y)=\ocs(X)$
for every closed subset $Y\subseteq X$ with $\ocs(X)\subseteq Y$. Our
next result will show that the inclusion $\ocs(X)\subseteq \ocs(Y)$ is
valid in general for every closed set $Y\subseteq X$ with
$\ocs(X)\subseteq Y$. To this end, let us recall that a point $p\in X$
of a space $X$ is \emph{countably-approachable} \cite{gutev:05a} if it
is either isolated or has a countable clopen base in $\overline{U}$
for some open set $U\subseteq X\setminus\{p\}$ with
$\overline{U}=U\cup\{p\}$. The non-isolated countably-approachable
points were called \emph{$\omega$-approachable} in \cite{gutev:05a}.
One can easily see that a point $p\in X$ is $\omega$-approachable
precisely when $p$ is the limit of a pairwise disjoint sequence
$S_n\subseteq X\setminus\{p\}$, $n\in \N$, of nonempty clopen subsets
of $X$. Here, by ``limit'' we mean that the sequence
$\left\{S_n\right\}\subseteq \mathcal{F}(X)$ is $\tau_V$-convergent to
$\{p\}$.\medskip

\begin{theorem}
  \label{theorem-PM-Points-v21:1}
  If $X$ is a space, then\/ $\ocs(X)\subseteq \ocs(Y)$ for every
  closed subset $Y\subseteq X$ with $\ocs(X)\subseteq Y$.
\end{theorem}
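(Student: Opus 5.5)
The plan is to take $p\in\ocs(X)$, witnessed by some $f\in\sel[\mathcal{F}(X)]$ with $f(X)=p$, and to produce a selection $g\in\sel[\mathcal{F}(Y)]$ with $g(Y)=p$. The restriction $f\restriction\mathcal{F}(Y)$ is always a continuous selection for $\mathcal{F}(Y)$, but it only gives $f(Y)\in\ocs(Y)$, and $f(Y)$ need not equal $p$. So the argument has to adjust $f$ before restricting it. I would split into two cases according to the description of the selection maximal points of $\ocs(X)$ in \cite[Theorem 1.5]{Gutev2024a}, which the paper signals will be used here.

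\emph{Case 1: $p$ is selection maximal in $X$.} Then there is a $p$-maximal $h\in\sel[\mathcal{F}(X)]$. Since $p\in\ocs(X)\subseteq Y$, we get $h(Y)=p$. Hence $g=h\restriction\mathcal{F}(Y)$ works, and this case is immediate.

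\emph{Case 2: $p$ is not selection maximal.} Here I expect \cite[Theorem 1.5]{Gutev2024a} to say that $p$ is $\omega$-approachable. That is, $p$ is the $\tau_V$-limit of a pairwise disjoint sequence of nonempty clopen sets $S_n\subseteq X\setminus\{p\}$, chosen so that $f$ behaves monotonically near $p$ off $\bigcup_n S_n$. The basic tool is a clopen modification trick. For a clopen $T\subseteq X$, put
\[
  f_T(S)=f(S\cap T)\ \text{ if } S\cap T\neq\emptyset,\qquad f_T(S)=f(S)\ \text{ otherwise}.
\]
This $f_T$ is continuous because $T$ is clopen, and $f_T(X)=f(T)$. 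Consequently $f(T)\in\ocs(X)\subseteq Y$ for every nonempty clopen $T\subseteq X$. In particular $Y\cap S_n\neq\emptyset$ for each $n$, and $Y$ contains the points $f(T)$ for all clopen unions $T$ of tails of the sequence together with $p$. With this, I would define $g$ on $\mathcal{F}(Y)$ piecewise. On the sets $S\in\mathcal{F}(Y)$ meeting some $S_n$ but not containing $p$, $g$ would use $f$ on the relevant clopen pieces $S\cap S_n$. Whenever $p\in S$, set $g(S)=p$. For the remaining sets, $g$ agrees with $f$. Continuity of $g$ at sets containing $p$ should follow from the convergence $S_n\to\{p\}$ together with the fact that each $S_n$ is clopen.

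The main obstacle is Case 2. First, I need the exact form of \cite[Theorem 1.5]{Gutev2024a}: which non-maximal points of $\ocs(X)$ arise, and how the sequence $S_n$ interacts with $f$. Second, I need to check continuity of the glued selection $g$ at sets $S\ni p$ that meet infinitely many $S_n$. I would first try to prove that continuity directly from $\tau_V$-convergence. Failing that, I would replace the piecewise definition by a single selection $f_T$ with $T=\{p\}\cup\bigcup_{n\ge m}S_n$ for suitable $m$, restricted to $\mathcal{F}(Y)$. This uses that $f_T(Y)=f(Y\cap T)$, and I would then argue that $f(Y\cap T)=p$ when $m$ is chosen large enough.
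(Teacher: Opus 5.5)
Your Case~1 is correct: it covers both the paper's isolated case and its non-countably-approachable case. Your reduction of Case~2 to $\omega$-approachability also agrees with how the paper uses \cite[Theorem 1.5]{Gutev2024a}. In Case~2, your $f_T$ trick correctly gives $S_n\cap Y\neq\emptyset$ for every $n$; this is exactly the fact the paper takes from \cite[Lemma 2.1]{gutev-nogura:00b}.

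\textbf{The gap is the construction of $g$.} Your piecewise $g$ sets $g(S)=p$ for \emph{every} $S\ni p$. If it were continuous, it would be a $p$-maximal selection for $\mathcal{F}(Y)$. Since $Y=X$ is allowed, $p$ would then be selection maximal in $X$, which contradicts the hypothesis of Case~2. So no continuity check can succeed.

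Case~2 is also not vacuous. In $X=L(\omega,\omega_1)$ the wedge point $p$ is $\omega$-approachable via the isolated points $a_n$ of the $\omega$-branch. Suppose $g$ were $p$-maximal. Continuity at $\{a_n,p\}$ gives $\gamma_n<\omega_1$ with $g(\{a_n,\alpha\})=\alpha$ for all $\alpha>\gamma_n$. Continuity at $\{\alpha,p\}$ gives $g(\{a_n,\alpha\})=a_n$ for all large $n$. Both cannot hold for $\alpha>\sup_n\gamma_n$.

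Your fallback fails too. The set $T=\{p\}\cup\bigcup_{n\geq m}S_n$ is closed but in general not open: in the example above, every neighbourhood of $p$ meets the $\omega_1$-branch. So $f_T$ need not be continuous. Even when $T$ is clopen, nothing forces $f(Y\cap T)=p$ for a fixed $f$, which may pick a point of $S_m$ for every $m$.

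\textbf{The missing idea:} in Case~2 you should not aim for a $p$-maximal selection on $Y$ at all. Put $T_n=S_n\cap Y$. These sets are nonempty, pairwise disjoint, clopen in $Y$, miss $p$, and converge to $\{p\}$. Hence $p$ is $\omega$-approachable in $Y$, and $h=f|_{\mathcal{F}(Y)}\in\sel[\mathcal{F}(Y)]$. The paper now concludes with \cite[Lemma 4.2]{gutev:05a}: an $\omega$-approachable point of a space with a continuous selection belongs to its $\ocs$.

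That lemma has a short direct proof you could have supplied. For $S\in\mathcal{F}(Y)$, let $\ell(S)\in\{0,1,\dots,\infty\}$ be the largest $k$ with $S\cap T_i\neq\emptyset$ for all $i\leq k$. Define
\begin{itemize}
\item $g(S)=h(S)$ if $\ell(S)=0$;
\item $g(S)=h(S\cap T_{\ell(S)})$ if $0<\ell(S)<\infty$;
\item $g(S)=p$ if $\ell(S)=\infty$; then $p\in S$, because $S$ is closed and $T_n\to\{p\}$.
\end{itemize}
For $k<\infty$, each set $\{S:\ell(S)=k\}$ is open in $\mathcal{F}(Y)$, and $S\mapsto S\cap T_k$ is continuous on it. At $S$ with $\ell(S)=\infty$, the neighbourhood $\{S':S'\cap T_i\neq\emptyset,\ i\leq m\}$ forces $g(S')\in\{p\}\cup\bigcup_{n\geq m}T_n$. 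For large $m$ this lies in any prescribed neighbourhood of $p$. Since $\ell(Y)=\infty$, we get $g(Y)=p$. Note that $g(S)\neq p$ for many $S\ni p$, which is exactly the freedom your construction did not allow.
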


\begin{proof}
  Take a closed set $Y\subseteq X$ with $\ocs(X)\subseteq Y$. In case
  $\ocs(X)=\emptyset$, there is nothing to prove. Otherwise, take a
  point $p\in \ocs(X)$. If this point is isolated in $X$, then it is
  also isolated in $Y$ and trivially $p\in \ocs(Y)$. Assume that
  $p\in X$ is $\omega$-approachable. Then, as stated above, $p$ is the
  limit of a pairwise disjoint sequence
  $S_n\subseteq X\setminus\{p\}$, ${n\in \N}$, of nonempty clopen
  subsets of $X$. However, by \cite[Lemma 2.1]{gutev-nogura:00b}, each
  nonempty clopen subset of $X$ contains a point of
  $\ocs(X)$. Therefore, $p$ is also $\omega$-approachable in $Y$
  because $S_n\cap Y\neq \emptyset$ for every $n\in \N$. Thus, by
  \cite[Lemma 4.2]{gutev:05a}, $p\in \ocs(Y)$. Finally, if $p\in X$ is
  not countably approachable, then \cite[Theorem 1.5]{Gutev2024a}
  implies that $\mathcal{F}(X)$ has a $p$-maximal selection
  $f\in\sel[\mathcal{F}(X)]$. Accordingly, $f(Y)=p$ and the proof is
  complete.
\end{proof}
 
We conclude this section with the following two remarks.

\begin{remark}
  \label{remark-PM-Points-v24:1}
  Let $X$ be a space such that
  $\left|\noncut(\mathcal{C}[p])\right|\leq 2$ for every $p\in X$. The
  definition of a Purisch subset $Z\subseteq X$ in \cite{gutev:07a} is
  a slight modification of similar sets defined by Purisch in
  \cite{purisch:77}. The difference is for a point $p\in X$ such that
  $\mathcal{C}[p]\subseteq X$ is non-degenerate and open in $X$. In
  this case, in terms of \cite{purisch:77},
  $\left|Z\cap \mathcal{C}[p]\right|=1$ without explicitly requiring
  that $\noncut(\mathcal{C}[p])\subseteq Z$. In other words, in the
  terminology of \cite{purisch:77},
  $\left|Z\cap \mathcal{C}[p]\right|=1$ precisely when
  $\mathcal{C}[p]$ is a singleton or a clopen subset of $X$. In
  contrast, the modification in \cite{gutev:07a} aims to keep a track
  on the noncut points of $\mathcal{C}[p]$. Thus, in terms of
  \Cref{definition-purisch-set}, if $\mathcal{C}[p]$ is a
  non-degenerate clopen component of $X$ with
  $\noncut(\mathcal{C}[p])\neq \emptyset$, then
  $\noncut(\mathcal{C}[p])\subseteq Z$ and
  $\left|Z\cap \mathcal{C}[p]\right|=2$. Finally, let us also remark
  that in both interpretations, these special subsets are uniquely
  determined in the sense that any two such sets are
  homeomorphic. This was explicitly stated in
  \cite{purisch:77}. Similarly, if $Z_1,Z_2\subseteq X$ are Purisch
  subsets in the sense of \Cref{definition-purisch-set}, then
  $\noncut(X)\subseteq Z_1\cap Z_2$ and each subset
  $Z_i\setminus \noncut(\mathcal{C}[p])$, $i=1,2$, consists of
  isolated points of $Z_i$. Since
  $Z_1\setminus \noncut(\mathcal{C}[p])$ and
  $Z_2\setminus \noncut(\mathcal{C}[p])$ have the same cardinality,
  this shows that $Z_1$ and $Z_2$ are homeomorphic.
\end{remark}

\begin{remark}
  \label{remark-PM-Points-v21:1}
  As mentioned before, a point $p\in X$ in a connected space $X$ is
  cut precisely when $X\setminus\{p\}=U\cup V$ for some (open) subsets
  $U,V\subseteq X$ such that $\overline{U}\cap
  \overline{V}=\{p\}$. Extending this interpretation to an arbitrary
  space $X$, a point $p\in X$ is called \emph{cut}
  \cite{gutev-nogura:03a}, see also \cite{gutev:00e,gutev-nogura:00d},
  if $X\setminus\{p\}=U\cup V$ and
  $\overline{U}\cap \overline{V}=\{p\}$ for some subsets
  $U,V\subseteq X$. Cut points were also introduced in \cite{Dow2008},
  where they were called \emph{tie-points}. As shown in \cite[Theorem
  1.2]{Gutev2024a}, a point $p\in \ocs(X)$ is a cut point of a space
  $X$ precisely when it is $\omega$-approachable in $X$.
\end{remark}

\section{Selections and First Countable Spaces}
\label{sec:select-first-count}

A family $\mathcal{P}$ of open sets of a space $X$ is a
\emph{$\pi$-base} (called also a \emph{pseudobase}, Oxtoby
\cite{oxtoby:60}) if every nonempty open subset of $X$ contains some
nonempty member of $\mathcal{P}$. The following result was obtained in
\cite[Theorem 2.1]{gutev:05a}.

\begin{theorem}[\cite{gutev:05a}]
  \label{theorem-background-orbits}
  Let $X$ be a space such that
  $\sel[\mathcal{F}(X)]\neq\emptyset$. Then the set\/ $\ocs(X)$ is
  dense in $X$ if and only if\/ $X$ has a clopen $\pi$-base.
\end{theorem}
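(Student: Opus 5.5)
We prove the two implications separately. In both we fix $g\in\sel[\mathcal{F}(X)]$ and use two facts: \cite[Lemma 2.1]{gutev-nogura:00b}, which says that every nonempty clopen subset of $X$ meets $\ocs(X)$, and the converse mechanism behind it, which produces points of $\ocs(X)$ from clopen sets.

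\emph{Clopen $\pi$-base implies $\ocs(X)$ dense.} Let $W\subseteq X$ be a nonempty open set. By assumption it contains a nonempty clopen set $C$. We want a point of $\ocs(X)$ inside $C$.

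Define $f:\mathcal{F}(X)\to X$ by
\[
f(S)=g(S\cap C)\ \text{ if } S\cap C\neq\emptyset,\qquad f(S)=g(S)\ \text{ otherwise}.
\]
Since $C$ is clopen, the set $\{S: S\cap C\neq\emptyset\}=\langle X,C\rangle$ is $\tau_V$-clopen. Its complement is $\langle X\setminus C\rangle$. The map $S\mapsto S\cap C$ is $\tau_V$-continuous on $\langle X,C\rangle$ because $C$ is clopen. Hence $f$ is continuous on each of two complementary clopen pieces, so $f\in\sel[\mathcal{F}(X)]$. Moreover $f(X)=g(C)\in C\subseteq W$. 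So $\ocs(X)$ meets every nonempty open set and is therefore dense. This is exactly the argument of \cite[Lemma 2.1]{gutev-nogura:00b}.

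\emph{$\ocs(X)$ dense implies a clopen $\pi$-base.} Let $W$ be a nonempty open set. By density there is $p\in W$ with $p=f(X)$ for some $f\in\sel[\mathcal{F}(X)]$. We split into cases.

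If $p$ is isolated, then $\{p\}$ is a clopen subset of $W$ and we are done.

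Otherwise, first suppose $p$ is a cut point in the sense of \Cref{remark-PM-Points-v21:1}. By \cite[Theorem 1.2]{Gutev2024a}, $p$ is $\omega$-approachable. So there are pairwise disjoint nonempty clopen sets $S_n\subseteq X\setminus\{p\}$ with $S_n\to\{p\}$ in $\tau_V$, and then $S_n\subseteq W$ for large $n$.

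The remaining case, $p$ not countably-approachable, is the main obstacle. Here we would use \cite[Theorem 1.5]{Gutev2024a} to obtain a $p$-maximal selection $h$. We would then mimic the proof of \Cref{proposition-Sequences-v6:1}. Put $A=X\setminus W$ and $k(x)=h(A\cup\{x\})$, and consider the set $H=\{x:k(x)=x\}$. By $p$-maximality, $k(p)=p$, so $p\in H$. The set $H$ is closed, and $H\setminus A=k^{-1}(W)$ is open. If $h(A)$ could be separated from $p$ by clopen sets, we would hope to show that $H\setminus A$ contains a nonempty clopen set.

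If this direct approach fails, we would fall back on the original strategy of \cite{gutev:05a}. There one approximates $W$ from inside using the points $f(\overline{V})$ for open $V\subseteq W$, and takes a clopen quasi-component–type set via \Cref{theorem-components-selection-relation}, combined with the total disconnectedness of $\overline{\ocs(X)}$ from \Cref{theorem-Sequences-v3:1}. Since $\overline{\ocs(X)}=X$, the space $X$ is totally disconnected, so points have clopen-intersection neighbourhoods. The remaining task is to upgrade \emph{some} clopen set inside $W$ using compactness-like behaviour of $f$ restricted to the closed sets $\overline{V}$.
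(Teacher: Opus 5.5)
The paper gives no proof of its own here; it quotes \cite[Theorem 2.1]{gutev:05a}. So this is an assessment of your argument on its own terms.

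Your first implication is correct. The sets $\langle X,C\rangle$ and $\langle X\setminus C\rangle$ are complementary $\tau_V$-open sets, $S\mapsto S\cap C$ is Vietoris continuous on the first, and $f(X)=g(C)\in C\subseteq W$. This is the argument of \cite[Lemma 2.1]{gutev-nogura:00b}. In the converse, the isolated and $\omega$-approachable cases are also fine.

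\textbf{The gap.} The remaining case is the only nontrivial one, and it is not proved. You set up the $p$-maximal selection $h$, the map $k$ and the closed set $H$, but stop at ``we would hope''. The fallback you offer does not work. Total disconnectedness of $X$, even together with $\sel[\mathcal{F}(X)]\neq\emptyset$, is not known to yield a nonempty clopen subset of $W$; that is exactly the open \Cref{question-Sequences-v2:1}. Also, ``compactness-like behaviour of $f$ on the sets $\overline{V}$'' is not an argument.

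\textbf{How to close it.} Combine the two ingredients you already have.
\begin{enumerate}
\item If $W=X$ there is nothing to prove. Otherwise put $A=X\setminus W\neq\emptyset$ and $q=h(A)\in A$.
\item Your construction gives $H\cap A=\{q\}$, $H\setminus A=k^{-1}(W)$ open, and $p\in H$ by $p$-maximality. So $k^{-1}(W)$ is an open neighbourhood of $p$ inside $W$, and its closure adds at most the single point $q$.
\item Since $\ocs(X)$ is dense, $X$ is totally disconnected by \cite[Theorem 1.5]{gutev-nogura:00b} (or \Cref{theorem-Sequences-v3:1}). Hence there is a clopen set $C$ with $p\in C$ and $q\notin C$.
\item Then
\[
C\cap H=C\cap k^{-1}(W).
\]
This set is closed (as $C\cap H$) and open (as $C\cap k^{-1}(W)$). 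It contains $p$ and lies in $W$.
\end{enumerate}
Replace the tentative paragraph and the fallback with this, and your proof of the converse is complete.

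The detour through cut points and \cite[Theorem 1.2]{Gutev2024a} is unnecessary. You can split $p\in W\cap\ocs(X)$ directly into isolated, $\omega$-approachable, and not countably-approachable, and apply \cite[Theorem 1.5]{Gutev2024a} only in the last case.
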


In \cite[Corollary 4.5]{gutev:05a}, based on this result, it was shown
that if $X$ is a regular first countable space and $\ocs(X)$ is dense
in $X$, then $\ocs(X)=X$. Here, we will show that the condition on $X$
to be regular is not necessary, in fact we will obtain the following
more general result.

\begin{theorem}
  \label{theorem-Sequences-v4:2}
  If\/ $X$ is a totally disconnected first countable space, then\/
  $\ocs(X)$ is closed in $X$.
\end{theorem}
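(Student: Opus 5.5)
We show that every point $p\in\overline{\ocs(X)}$ belongs to $\ocs(X)$. We may assume $\ocs(X)\neq\emptyset$, so that $\sel[\mathcal{F}(X)]\neq\emptyset$. If $p$ is isolated, then $p\in\ocs(X)$ at once. So let $p$ be non-isolated with $p\notin\ocs(X)$, and aim for a contradiction by showing that $p$ is $\omega$-approachable. \cite[Lemma 4.2]{gutev:05a}, used in the proof of \Cref{theorem-PM-Points-v21:1}, would then give $p\in\ocs(X)$. Our reading of that lemma is that $\omega$-approachable points of a space with a continuous selection lie in the orbit set.

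\emph{Setting up.} Fix a decreasing countable local base $W_1\supseteq W_2\supseteq\cdots$ at $p$. By first countability, pick points $p_n\in\ocs(X)\cap W_n$ with $p_n\neq p$, converging to $p$, and selections $f_n\in\sel[\mathcal{F}(X)]$ with $f_n(X)=p_n$.

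\emph{Clopen sets around each $p_n$.} For each $n$ we want a nonempty clopen set $S_n\subseteq W_n\setminus\{p\}$ containing $p_n$; this is the main obstacle. Regularity is not assumed, so total disconnectedness alone gives clopen sets separating $p_n$ from $p$, but not small ones. The plan is to manufacture small clopen sets from the selections, as in the proof of \Cref{proposition-Sequences-v6:1}. Let $U$ be an open set with $p_n\in U\subseteq W_n$ and $p\notin\overline U$; obtaining the condition $p\notin\overline U$ itself needs care, since $X$ need not be regular. Put $A=X\setminus U$ and $h(x)=f_n(A\cup\{x\})$. Then $H=\{x:h(x)=x\}$ is closed, $V=h^{-1}(U)$ is open, and $H=V\cup\{a\}$ with $a=f_n(A)\in A$. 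Moreover $p_n\in V$, because $h(p_n)\in\{a,p_n\}\cap U$ (continuity of $h$ at $p_n$ forces $h(p_n)=p_n$). Total disconnectedness now yields a clopen $G\ni a$ with $p_n\notin G$, and we set $S_n=H\setminus G=V\setminus G$. This set is both closed and open, and it contains $p_n$. To also exclude $p$, either shrink $U$ or intersect with a clopen set missing $p$. This step is where we expect trouble, and the fallback is to invoke the local description of points of $\ocs(X)$ in \cite[Theorem 1.5]{Gutev2024a}.

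\emph{Making the sets disjoint.} Having $p_n\in S_n\subseteq W_n\setminus\{p\}$ with $S_n$ clopen, we refine recursively. Each finite union $S_1\cup\cdots\cup S_k$ is clopen and misses $p$, so some $W_m$ is disjoint from it. Passing to a subsequence therefore gives pairwise disjoint clopen sets $S_{n_k}\subseteq W_{n_k}$. These converge to $\{p\}$ in $\tau_V$, because $S_{n_k}\subseteq W_{n_k}$ and $(W_{n_k})$ is a local base at $p$. Hence $p$ is $\omega$-approachable, which contradicts $p\notin\ocs(X)$ and proves the theorem.
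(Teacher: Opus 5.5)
\textbf{There is a genuine gap.} It sits in the step $p_n\in V$, i.e.\ $h(p_n)=f_n(A\cup\{p_n\})=p_n$. Continuity of $h$ forces nothing here. A priori you only know $h(p_n)\in A\cup\{p_n\}$.

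In \Cref{proposition-Sequences-v6:1} this step came from monotonicity, via \ref{item:PM-Points-v17:2} of \Cref{theorem-PM-Points-v17:1}, and that is special to connected spaces. In a general space, $f(X)=q$ does not give $f(S)=q$ for closed $S\ni q$. For instance, on $X=\omega+1$ let $f(\{0,1\})=1$, let $f(S)=0$ for every other $S\ni 0$, and let $f(S)=\min S$ when $0\notin S$. The three pieces of $\mathcal{F}(X)$ involved are clopen, so $f\in\sel[\mathcal{F}(X)]$ and $f(X)=0$. Yet for $U=X\setminus\{1\}$ (so $A=\{1\}$) you get $h(0)=f(\{0,1\})=1\notin U$.

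So your $S_n=V\setminus G$ need not contain $p_n$ and may even be empty, and the required nonempty clopen sets are never produced. By contrast, the point you flagged as troublesome is harmless. $S_n\subseteq V\subseteq U$ already misses $p$ as soon as $p\notin U$, and no regularity is needed for that.

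\textbf{The missing idea.} You do not need a clopen set \emph{containing} $p_n$, only some nonempty clopen subset of the open set $W_n\setminus\{p\}$. The paper gets it as follows. If $p_n$ is not isolated, first countability makes it the limit of a nontrivial sequence. Since $X$ is totally disconnected and $p_n\in\ocs(X)$, \cite[Theorem 1.2 and Corollary 1.3]{Gutev2024a} then make $p_n$ $\omega$-approachable. Hence pairwise disjoint nonempty clopen sets converge to $\{p_n\}$ in $\tau_V$, and one of them lies in $W_n\setminus\{p\}$. Your disjointification and the appeal to \cite[Lemma 4.2]{gutev:05a} then finish exactly as in the paper.

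Your fallback can also be made rigorous by a case split. If $p_n$ is countably-approachable, use the clopen sets just described. Otherwise \cite[Theorem 1.5]{Gutev2024a} gives a $p_n$-maximal selection, and with it in place of $f_n$ your construction is correct: $h(p_n)=p_n$, and $V\setminus G$ is clopen with $p_n\in V\setminus G\subseteq W_n\setminus\{p\}$. As written, however, neither route is carried out, and the justification you give for the key step is false.
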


\begin{proof}
  Let us assume that a point $p\in X$ is the limit of a sequence
  $\left\{x_n\right\}\subseteq \ocs(X)\setminus \{p\}$. The proof now
  consists in showing that such a point $p\in X$ is
  $\omega$-approachable. To this end, we take a decreasing local base
  $\{U_n\}$ at this point. Then $U_1$ contains some term $x_{n_1}$ of
  the sequence $\{x_n\}$. If $x_{n_1}$ is an isolated point, take
  $S_1=\left\{x_{n_1}\right\}$. Otherwise, if $x_{n_1}$ is not
  isolated, it is the limit of a nontrivial sequence. Since $X$ is
  totally disconnected and $x_{n_1}\in \ocs(X)$, it follows from
  \cite[Theorem 1.2 and Corollary 1.3]{Gutev2024a} that $x_{n_1}$ is
  $\omega$-approachable. Hence, by definition, $U_1\setminus\{p\}$
  contains a nonempty clopen set $S_1$ because
  $x_{n_1}\in U_1\setminus\{p\}$. Next, take $n_2>n_1$ such that
  $x_{n_2}\in U_{n_2}$ and $U_{n_2}\cap S_1=\emptyset$. Then for the
  same reason as before, $U_{n_2}\setminus\{p\}$ contains a nonempty
  clopen subset $S_2\subseteq X$. Thus, by induction, there exists a
  subsequence $\left\{U_{n_k}\right\}$ of $\{U_n\}$ and a sequence
  $\left\{S_k\right\}$ of nonempty clopen subsets of $X$ such that
  $S_k\subseteq U_{n_k}\setminus U_{n_{k+1}}$ for every
  $k\in\N$. Accordingly, $\{S_k\}$ is convergent to $p$, and its
  elements do not contain the point $p$. Hence, by definition, $p$ is
  $\omega$-approachable. Therefore, by \cite[Lemma 4.2]{gutev:05a},
  $p\in \ocs(X)$.
\end{proof}

We conclude this paper with two examples concerning the role of total
disconnectedness and first countability in
\Cref{theorem-Sequences-v4:2}, and some additional remarks regarding
the role of clopen $\pi$-bases in \Cref{theorem-background-orbits}.

\begin{example}
  \label{example-PM-Points-v21:1}
  There exists a totally disconnected compact orderable space $X$ such
  that $\ocs(X)$ is dense in $X$ but not closed in $X$. Such an
  example can be constructed as follows. For limit ordinals $\lambda$
  and $\mu$, as in \cite{fujii-nogura:99}, we will use
  $L(\lambda, \mu)$ to denote the \emph{wedge sum}
  $(\lambda + 1) \vee_{\lambda=\mu} (\mu + 1)$. We can now take
  $X=L(\omega_1,\omega_1)$, where $\omega_1$ is the first uncountable
  ordinal. Also, for convenience, let $p=\omega_1\in X$ be the point
  at which the ordinals $\omega_1$ and $\omega_1$ are identified. Then
  $\ocs(X)=X\setminus\{p\}$. Indeed, if $q\in X$ and $q\neq p$, then
  $q$ is either an isolated point or a countable limit ordinal. Hence,
  in a trivial way, $q\in \ocs(X)$. The fact that $p\notin \ocs(X)$
  follows from \cite[Theorem 1.3]{Gutev2024a} because $p$ is a cut
  point of $X$ (see \Cref{remark-PM-Points-v21:1}), but is not a limit
  of a sequence of points of $X \setminus\{p\}$.
\end{example}

\begin{example}
  \label{example-PM-Points-v21:2}
  There exists a separable metrizable space $X$ with
  $\overline{\ocs(X)}\setminus \ocs(X)\neq\emptyset$. The example is
  based on the following modification of the topological sine
  curve. Namely, if $t\geq 1$, then $\sin t\geq 0$ if and only if
  $t\in [2k\pi,(2k+1)\pi]$ for some $k\in\N$. Using this and following
  \cref{eq:PM-Points-v14:1}, let
  $\Delta_k=\left[\frac1{(2k+1)\pi},\frac1{2k\pi}\right]$ for every
  $k\in \N$. Next, set $p=(0,0)$ and
  $X=\{p\}\cup \left\{\left(t,\sin \frac1t\right): t\in \bigcup_{k\in
      \N}\Delta_k\right\}$. Since each set
  $U_k=\left\{\left(t,\sin \frac 1t\right): t\in \Delta_k\right\}$,
  $k\in \N$, is compact, orderable and clopen in $X$, the endpoints
  ${x_k=\left(\frac1{(2k+1)\pi},0\right),
    y_k=\left(\frac1{2k\pi},0\right)\in U_k}$ belong to
  $\ocs(X)$. Moreover, it is evident that
  $\lim_{k\to \infty}x_k=p=\lim_{k\to \infty}y_k$. This implies that
  $p\in X$ is a cut point of $X$ in the sense of
  \Cref{remark-PM-Points-v21:1}. However, $p\in X$ is not an
  $\omega$-approachable point of $X$ because the set
  $F=\left\{\left(\frac2{(4k+1)\pi}, 1\right): k\in \N\right\}$ is
  closed in $X$ and $X\setminus F$ doesn't contain any clopen
  neighbourhood of $p$. Accordingly, by \cite[Theorem
  1.2]{Gutev2024a}, $p\notin \ocs(X)$.
\end{example}

Regarding the role of clopen $\pi$-bases in
\Cref{theorem-background-orbits}, the following question was posed in
\cite[Question 391]{gutev-nogura:06b}.

\begin{question}
  \label{question-Sequences-v2:1}
  For a totally disconnected space $X$ with
  $\sel[\mathcal{F}(X)]\neq \emptyset$, is the set $\ocs(X)$ dense in
  $X$? In other words, is it true that each totally disconnected space
  $X$ with $\sel[\mathcal{F}(X)]\neq \emptyset$ has a clopen
  $\pi$-base?
\end{question}

\Cref{question-Sequences-v2:1} is open even in the special case of
(separable) metrizable spaces. Regarding this special case, let us
remark that each totally disconnected second countable space is weakly
orderable, see \cite[Remarks 5.5 and 5.6]{gutev:07a}. This leads to
the following further refinement of \Cref{question-Sequences-v2:1}.

\begin{question}
  \label{question-PM-Points-v21:1}
  For a totally disconnected weakly orderable space $X$ with
  $\sel[\mathcal{F}(X)]\neq \emptyset$, is the set $\ocs(X)$ dense in
  $X$?
\end{question}

\end{document}